\theoremstyle:=definition,remark,plain\do{%
        \expandafter\g@addto@macro\csname th@\theoremstyle\endcsname{%
            \addtolength\thm@preskip\parskip
            }%
        }
\theoremstyle{plain}
\newtheorem{theorem}{Theorem}[section]
\newtheorem{lemma}[theorem]{Lemma}
\theoremstyle{definition}
\theoremstyle{remark}
\newtheorem{remark}[theorem]{Remark}
\theoremstyle{plain}
\theoremstyle{plain}
\theoremstyle{remark}
\def \F {\mathcal{F}}
\def \s {\sigma}
\def \b {\beta}
\newcommand{\p}{\partial}
\newcommand{\R}{\mathbb{R}}
\newcommand{\N}{\mathbb{N}}
\newcommand{\norm}[1]{\left\|{#1}\right\|}
\newcommand{\ad}{\text{ad}}
\newcommand{\diag}{\mathrm{diag}}
\newcommand{\tridiag}{\mathrm{tridiag}}
\newcommand{\vectorize}{\mathrm{vec}}
\newcommand{\Err}{\mathrm{Err}}
\newcommand{\meanErr}{\mathrm{ME}}
\newcommand{\avgMeanErr}{\mathrm{AME}}
\newcommand{\LebesgueDelta}{\Delta^{\mathrm{Leb}}_t}
\newcommand{\stepSize}{\Delta_t}
\newcommand{\dimV}{n_v}
\newcommand{\lowerV}{a_v}
\newcommand{\upperV}{b_v}
\newcommand{\gridV}{\mathbb{V}^{\dimV}_{\lowerV,\upperV}}
\newcommand{\dimX}{n_x}
\newcommand{\lowerX}{a_x}
\newcommand{\upperX}{b_x}
\newcommand{\gridX}{\mathbb{X}^{\dimX}_{\lowerX,\upperX}}
\newcommand{\comm}[2]{\left[#1,#2\right]}
\newcommand{\abs}[1]{\left|#1\right|}
\renewcommand{\P}{\mathbb{P}}
\newcommand{\CPU}{2x AMD EPYC 7301 CPU @ 2.20\,GHz\xspace}
\newcommand{\RAM}{256\,GB RAM\xspace}
\newcommand{\GPU}{NVIDIA Tesla V100 PCIe (32\,GB HBM2 RAM)\xspace}
\newcommand{\OS}{Debian GNU/Linux 10 (buster)\xspace}
\newcommand{\matlab}{\protect\UseVerb{matlab}\xspace}
\newcommand{\matlabParalleltoolbox}{Parallel Computing Toolbox\xspace}
\newcommand{\euler}{\protect\UseVerb{euler}\xspace}
\newcommand{\exact}{\protect\UseVerb{exact}\xspace}
\newcommand{\mOne}{\protect\UseVerb{m1}\xspace}
\newcommand{\mTwo}{\protect\UseVerb{m2}\xspace}
\newcommand{\mThree}{\protect\UseVerb{m3}\xspace}
\begin{document}

\title{Numerical solution of kinetic SPDEs via stochastic Magnus expansion}

\author{Kevin Kamm\thanks{Dipartimento di Matematica, Universit\`a di Bologna, Bologna, Italy.
\textbf{e-mail}: kevin.kamm@unibo.it} \and Stefano Pagliarani\thanks{Dipartimento di Matematica,
Universit\`a di Bologna, Bologna, Italy. \textbf{e-mail}: stefano.pagliarani9@unibo.it} \and
Andrea Pascucci\thanks{Dipartimento di Matematica, Universit\`a di Bologna, Bologna, Italy.
\textbf{e-mail}: andrea.pascucci@unibo.it}}


\maketitle

\begin{abstract}
In this paper, we show how the Itô-stochastic Magnus expansion can be used to efficiently solve stochastic partial differential equations (SPDE) with two space variables numerically.
To this end, we will first discretize the SPDE in space only by utilizing finite difference methods and vectorize the resulting equation exploiting its sparsity.

As a benchmark, we will apply it to the case of the stochastic Langevin equation with constant coefficients, where an explicit solution is available, and compare the Magnus scheme with the Euler-Maruyama scheme. We will see that the Magnus expansion is superior in terms of both accuracy and especially computational time by using a single GPU and verify it in a variable coefficient case. Notably, we will see speed-ups of order ranging form 20 to 200 compared to the Euler-Maruyama scheme, depending on the accuracy target and the spatial resolution. 
\end{abstract}
\textbf{Keywords:}
Magnus Expansion, Stochastic Langevin Equation, Numerical Solutions for SPDE, GPU Computing.\\\noindent
\textbf{Acknowledgements:}
This project has received funding from the European Union’s Horizon 2020 research and innovation programme
under the Marie Sklodowska-Curie grant agreement No 813261 and is part of the ABC-EU-XVA
project.\\\noindent
\textbf{Data Availability:}
All data generated or analysed during this study are included in this published article. In particular, the code to produce the numerical experiments is available at \url{https://github.com/kevinkamm/MagnusSPDE2D}.

\section{Introduction}\label{sec:intro}
In the recent paper \cite{KPP2021} we have introduced a stochastic version of the Magnus expansion
\cite{MR67873} (hereafter abbreviated in ME), namely an exponential representation for solutions
of matrix-valued stochastic differential equations (SDEs), in the It\^o sense, of the form
\begin{equation}\label{eq:SDE_linear_b}
   d X_t = B_t X_t dt + A_t X_t d W_t, \qquad X_0 = I_d,
\end{equation}
for non-commuting  bounded progressively measurable random matrices $A_t$ and $B_t$. These results have recently found applications in the study
of so-called signature cumulants \cite{MR4436600}, semi-linear It\^o SDEs \cite{MR4341046}, linear
SDEs on matrix Lie groups \cite{MR4386519}, stochastic modeling of motion in turbulent flows
\cite{Campana}, stability of multi-variate geometric Brownian motion \cite{Barrera} and modeling
rating transition matrices in quantitative finance \cite{KM22}.

In this note we investigate  the application of the ME to the numerical resolution of a class of hypoelliptic stochastic partial
differential equations (SPDEs) that naturally arises in physics and mathematical finance. The
deterministic prototype of such SPDEs is the classical Langevin equation
\begin{equation}\label{eq:lange}
  \frac{1}{2}\p_{vv}u_{t}+v\p_{x}u_{}-\p_{t}u_{t}=0,
\end{equation}
where the variables $t\ge 0$, $x\in\R$ and $v\in\R$ respectively stand for time, position and
velocity, and the unknown $u_{t}=u_{t}(x,v)$ stands for the density of a particle in the phase
space. Notice that \eqref{eq:lange} is a degenerate, non-uniformly parabolic PDE. Perturbations of
\eqref{eq:lange} with variable coefficients appear in linear and non-linear form in several
applications in kinetic theory (see, for instance, \cite{Lions1}, \cite{Desvillettes} and
\cite{Cercignani}); also, \eqref{eq:lange} describes path-dependent financial derivatives such as
Asian options and volatility contracts (see, for instance, \cite{Pascucci2011} and
\cite{MR2098720}).

We consider here the stochastic version of \eqref{eq:lange}, which is the kinetic SPDE
\begin{align}\label{eq:spde}
  du_{t}=\left(\frac{a_t}{2}\p_{vv}+v\p_{x}+b_t\p_{v}+c_{t}\right)u_tdt+\left(\s_t\p_v+\b_{t}\right)u_t
  dW_t,
\end{align}
where $a_{t},b_{t},c_{t},\s_{t}$ and $\b_{t}$ are non-constant (i.e., for example,
$a_{t}=a_{t}(x,v)$) and possibly random coefficients. Here $W$ denotes a Wiener process defined on
a complete probability space $(\Omega,\F,\P)$ endowed with a filtration $\left(\F_{t}\right)_{t\ge
0}$ satisfying the usual conditions. SPDE \eqref{eq:spde} naturally appears in stochastic filtering
theory: as shown in \cite{pesfiltering,Pascucci2022}, the fundamental solution of \eqref{eq:spde} is
the conditional transition density of a two-dimensional stochastic process representing the
position and the velocity of a particle under partial observation.

The numerical solution of \eqref{eq:spde} is a challenging issue, as standard techniques, such as
Euler methods, can be cumbersome and excessively time-consuming. In this paper, we want to
demonstrate how the It\^o-stochastic ME can be used as an efficient numerical tool to solve SPDEs
meanwhile exploiting modern computer architectures such as GPUs and multiple CPUs.
The theoretical
groundwork for the It\^o-stochastic ME was established in \cite{KPP2021}
 alongside some numerical experiments using a GPU to fully utilize the parallel-in-time and parallel-in-simulation features of the ME.
Here, we show how the Magnus expansion can be used iteratively to overcome the constraint of a small convergence radius in time.
Indeed, by \cite[Theorem 1]{KPP2021}, the representation $X_t = e^{Y_t}$ for the unique strong
solution to \eqref{eq:SDE_linear_b} is valid only up to a strictly positive stopping time. More
precisely, we have the following:
\begin{theorem}[\cite{KPP2021}]\label{thm:convergence}
Let $A_t$ and $B_t$ be 
bounded progressively measurable matrices in $\R^{d\times d}$ and let
$(\Omega,\mathcal{F},\P,(\mathcal{F}_t)_{t\geq 0})$ be a filtered probability space equipped with
a standard Brownian motion $W$. For $T>0$ let also $X=(X_t)_{t\in[0,T]}$ be the unique strong
solution to \eqref{eq:SDE_linear_b}. There exists a strictly positive stopping time $\tau\leq T$
such that:
\begin{compactenum}
\item $X_t$ has a real logarithm $Y_t\in \R^{d\times d}$ up to time $\tau$, i.e.
\begin{equation}\label{eq:logarith_real_2}
 X_t = e^{Y_t},\qquad 0\leq t<\tau;
\end{equation}
\item the following representation holds $\P$-almost surely:
\begin{equation}\label{eq:convergence}
Y_t = \sum_{n=1}^{\infty} Y^{(n)}_t,\qquad 0\leq t<\tau,
\end{equation}
where $Y^{(n)}$ is the $n$-th term in the stochastic ME (see also formulas
\eqref{eq:MagnusFormulas} below, in the case of constant matrices $A$ and $B$);
\item there exists a positive constant $C$, {only dependent $A$, $B$, $T$ and $d$}, such that
\begin{equation}\label{eq:estimate_tau_conv_b}
 \P (\tau \leq t) \leq C t,\qquad t\in[0,T].
\end{equation}
\end{compactenum}
\end{theorem}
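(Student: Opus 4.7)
The plan is to define the stopping time $\tau$ as the first exit time of $X_t$ from a neighborhood of the identity in which the principal matrix logarithm is well defined, and then identify $\log X_t$ with the stochastic Magnus series via an It\^o-formula argument. Concretely, fix some $r\in(0,1)$ and set $\tau := \inf\{t\in[0,T] : \|X_t-I_d\|\ge r\}\wedge T$. On $\{t<\tau\}$ the principal logarithm $Y_t := \log X_t$ is well defined by the convergent series $\sum_{n\ge1}(-1)^{n+1}(X_t-I_d)^n/n$ and takes real values, yielding part (i) at once from path-continuity of $X$ and $X_0=I_d$.

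For part (ii), I would apply It\^o's formula to the map $X\mapsto\log X$ on the set $\{\|X-I_d\|<1\}$. The first and second Fr\'echet derivatives of $\log$ admit classical integral representations that, when composed with $A_t$ and $B_t$, produce iterated commutators with $Y_t$ via the adjoint operator $\mathrm{ad}_{Y_t}(M) = [Y_t,M]$. Substituting into It\^o's formula yields a closed SDE for $Y_t$ whose drift and diffusion are formal power series in $\mathrm{ad}_{Y_t}$ acting on $A_t$ and $B_t$. The stochastic ME $\sum_n Y^{(n)}_t$ is built recursively so as to solve this SDE order by order in iterated stochastic and Lebesgue integrals of the coefficients, so that matching the two representations and invoking pathwise uniqueness of the SDE satisfied by $Y$ gives \eqref{eq:convergence} on $[0,\tau)$.

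The main obstacle is proving that the series actually converges up to $\tau$. Each $Y^{(n)}_t$ is a homogeneous polynomial of degree $n$ in iterated It\^o and Lebesgue integrals of $A_t$ and $B_t$, weighted by combinatorial coefficients coming from the formal inversion of $d\exp$. Bounding the matrix norms by means of Burkholder--Davis--Gundy together with the uniform bounds on $A$ and $B$ yields moment estimates of order $t^{n/2}$ for each term, producing a geometric majorant valid on an event of the form $\{t<\tau'\}$ for a suitable stopping time $\tau'$. Showing $\tau'\ge\tau$ (or, equivalently, redefining $\tau$ as the minimum of the two) is the delicate combinatorial book-keeping step, and is where I would lean most heavily on the machinery developed in \cite{KPP2021}.

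Finally, for part (iii) I would observe that $\mathbb{P}(\tau\le t)\le\mathbb{P}(\sup_{s\le t}\|X_s-I_d\|\ge r)$. Writing $X_s-I_d=\int_0^s B_u X_u\,du+\int_0^s A_u X_u\,dW_u$, the uniform bounds on $A$, $B$ together with Gronwall give $\mathbb{E}[\sup_{s\le T}\|X_s\|^2]\le C$, and the Burkholder--Davis--Gundy inequality applied to the martingale part yields $\mathbb{E}[\sup_{s\le t}\|X_s-I_d\|^2]\le Ct$. Markov's inequality in the form $\mathbb{P}(\sup_{s\le t}\|X_s-I_d\|\ge r)\le r^{-2}\,\mathbb{E}[\sup_{s\le t}\|X_s-I_d\|^2]$ then delivers the claimed linear bound, with a constant depending only on the data $A$, $B$, $T$ and $d$.
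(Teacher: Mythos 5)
First, note that the paper does not prove this statement at all: Theorem \ref{thm:convergence} is quoted verbatim from \cite{KPP2021} and used as a black box, and the only related material in the present paper is the heuristic, purely formal derivation of the expansion formulas in Appendix \ref{sec:MagnusFormulas} (ansatz $X_t=e^{Y_t}$, It\^o's formula for the exponential map, Baker's lemma to invert $\mathcal{L}_{Y}$, Picard iteration). So there is no in-paper proof to match your argument against; your outline is in the same spirit as that appendix, except that you run It\^o's formula on $X\mapsto\log X$ rather than on $Y\mapsto e^{Y}$.

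As a self-contained proof, however, your proposal has a genuine gap at its core. The substance of the theorem is point (ii), the almost-sure convergence of the series $\sum_n Y^{(n)}_t$ on $[0,\tau)$, and this is precisely the step you do not carry out: you sketch moment estimates of order $t^{n/2}$ for the individual terms and then write that the ``delicate combinatorial book-keeping'' of turning these into a geometric majorant, and of reconciling the resulting stopping time $\tau'$ with your exit time $\tau$, is ``where I would lean most heavily on the machinery developed in \cite{KPP2021}.'' Since the statement being proved \emph{is} Theorem 1 of \cite{KPP2021}, this is circular; moment bounds on each $Y^{(n)}_t$ do not by themselves give pathwise summability on a common event, and obtaining a random radius of convergence that is bounded below by a stopping time with the property (iii) is exactly the hard part. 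Relatedly, your proof of (iii) only bounds $\P\bigl(\sup_{s\le t}\norm{X_s-I_d}\ge r\bigr)$ via Burkholder--Davis--Gundy and Markov (that computation is fine and does yield a linear bound with an admissible constant), but if the final $\tau$ has to be replaced by $\tau\wedge\tau'$ to guarantee convergence of the series, you must also show $\P(\tau'\le t)\le Ct$, which your argument does not address. A minor additional point: to legitimately invert $d\exp$ and write the drift and diffusion of $Y$ as power series in $\ad_{Y_t}$ you need a smallness condition on $Y_t$ (the hypothesis of Baker's lemma), which does follow from $\norm{X_t-I_d}<r$ for $r$ small, but should be said explicitly.
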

The first point of Theorem \ref{thm:convergence} tells us that the ME only converges up to a
stopping time: to overcome this restriction, the numerical implementation of the ME requires to
apply it iteratively in time. Clearly, by \eqref{eq:estimate_tau_conv_b} the convergence of the ME
is problem-dependent, meaning that there is no universal best time step-size for the ME. Point
(ii) of Theorem \ref{thm:convergence} actually yields the numerical scheme by truncating the
infinite series \eqref{eq:convergence}: 
we will see that in practice it is sufficient to consider only two or three
 terms to obtain a good
degree of accuracy.  For this reason, we will not recall the entire general expression for the terms
$Y_t^{(n)}$, 
but rather provide a user guide in
Appendix \ref{sec:MagnusFormulas} on how to easily derive the following expansion formulas, up to order $3$, in the case of constant $A$ and $B$:
\begin{equation}\label{eq:MagnusFormulas}
\begin{split}
    Y_t^{1} &=
        B t + A W_t,\\
    Y_t^{2} &=
        Y_t^1 - \frac{1}{2} A^2 t + \left[B,A\right] \int_{0}^{t}{W_s ds}
                    - \frac{1}{2} \left[B,A\right] tW_t,\\
    Y_t^3 &=
        Y_t^2 + \left[\left[B,A\right],A\right] \left(\frac{1}{2}\int_{0}^{t}{W_s^2 ds}
                    -\frac{1}{2}W_t \int_{0}^{t}{W_s ds}+\frac{1}{12} t W_t^2\right)
                    \\&\quad+\left[\left[B,A\right],B\right]
                    \left(
                        \int_{0}^{t}{sW_s ds}-\frac{1}{2} t \int_{0}^{t}{W_s ds}-\frac{1}{12} t^2 W_t
                    \right),
\end{split}
\end{equation}
where $\left[A,B\right]\coloneqq AB - BA$.



\bigskip The paper is organized as follows. In Section \ref{sec:twoDim} we 
show how to discretize a parabolic-type SPDE by an SDE of the type \eqref{eq:SDE_linear_b}. In Section
\ref{sec:numerics} we show the numerical experiments in the special case of the stochastic
Langevin equation with constant coefficients where the exact solution is available in closed form.
First, the implementation and notations for the error-analysis are introduced and afterwards
several tests concerning the parameters of the ME are discussed. Then we compare the performance
of the iterated ME with that of standard Euler-Maruyama schemes. In Section \ref{sec:stochasticLangevinVar} we consider the
more general case of SPDE \eqref{eq:spde} with variable coefficients. In Appendix
\ref{sec:MagnusFormulas} we provide a heuristic derivation of the expansion formulas up to order three.

\section{SPDEs with two space-variables}\label{sec:twoDim}
In the introduction we discussed the stochastic Langevin equation as our main application for the ME.

In this section, we show how to derive a numerical scheme for a general parabolic SPDE by combining space-discretization and ME. We will perform some formal computations, which hold even for a 
fairly general class of SPDEs. Since the computations are understood in a formal manner, we do not impose any further conditions on the coefficients of the following type of SPDE
\begin{align}
    \left\{
    \begin{aligned}[c]\arraycolsep=0pt
    &
    \begin{aligned}[t]\arraycolsep=0pt
        \hspace{1em}&\hspace{-1em}
    d u_t(x,v)=
        \biggl(
    h(x,v) u_t(x,v)
    + f^x(x,v) \partial_x u_t(x,v)
    + f^v(x,v) \partial_v u_t(x,v)
    \\&\quad
    + \frac{1}{2} g^{xx}(x,v) \partial_{xx} u_t(x,v)
    + g^{xv}(x,v) \partial_{xv} u_t(x,v)
    + \frac{1}{2} g^{vv}(x,v) \partial_{vv} u_t(x,v)
    \biggr) dt
    \\&\quad
    +
    \big(
        \sigma(x,v) u_t(x,v)
        + \sigma^{x}(x,v) \partial_{x} u_t(x,v)
        + \sigma^{v}(x,v) \partial_{v} u_t(x,v)
    \big) dW_t
    \end{aligned}\\&
    u_0(x,v)=\phi(x,v).
    \end{aligned}
    \right.
\label{eq:generalSPDE}
\end{align}
For simplicity, we will only consider the case of time-independent and non-random coefficients but it is straightforward to include them in the formal computations.

In the following subsections, we will demonstrate how to derive an SDE of the form \eqref{eq:SDE_linear_b} to approximate the SPDE \eqref{eq:generalSPDE} and apply the Magnus expansion to it. In the end, we will recall the Euler-Maruyama scheme for the approximating SDE.

\paragraph*{Space discretization and Magnus expansion.}

Following \cite[Section 3.2]{KPP2021}, we will
discretize the space variables but not time. After that we will vectorize the equation to derive a matrix-valued equation. We introduce the following 
two homogeneous grids for position and velocity respectively:
$\gridX$
with $\dimX+2$ points on the subset $[\lowerX,\upperX]\subset \R$,
$\gridV$
with $\dimV+2$ points on the subset $[\lowerV,\upperV]\subset \R$:
\begin{align}
    \gridX &\coloneqq
    \left\{
        x_i^{\dimX} \in [\lowerV,\upperV] :
            x_i^{\dimV}=\lowerX+i\Delta x,\ i=0,\dots,\dimV+1
    \right\}, &&& \Delta x&\coloneqq \frac{\upperX-\lowerX}{\dimX+1},
    \label{eq:xGrid}\\
    \gridV &\coloneqq
    \left\{
        v_j^{\dimV} \in [\lowerV,\upperV] :
            v_j^{\dimV}=\lowerV+j\Delta v,\ j=0,\dots,\dimV+1
    \right\}, &&& \Delta v&\coloneqq \frac{\upperV-\lowerV}{\dimV+1}.
    \label{eq:vGrid}
\end{align}
Let us 
denote by $\vectorize$ the isomorphism of transforming a matrix to a larger column-vector by stacking each column in the matrix below each other, i.e.
\begin{align*}
    \vectorize:\R^{\dimX\times\dimV} &\rightarrow \R^{\dimX\cdot \dimV\times 1},\\
    A=[a_{ij}] &\mapsto \vectorize(A)\coloneqq
    [a_{1,1},\dots,a_{\dimX,1},a_{1,2},\dots,a_{\dimX,2},\dots,a_{1,\dimV},\dots,a_{\dimX,\dimV}]^T.
\end{align*}

Moreover, let us define
\begin{align*}
    u_t^{\dimX,\dimV}\coloneqq
    \left(
        u_t(x_i,v_j)
    \right)_{\substack{i=1,\dots,\dimX\\j=1,\dots,\dimV}}, &&
    \phi^{\dimX,\dimV}\coloneqq
        \left(
            \phi(x_i,v_j)
        \right)_{\substack{i=1,\dots,\dimX\\j=1,\dots,\dimV}} , &&
    \Phi^{\dimX\dimV}\coloneqq
        \vectorize\left(
            \phi^{\dimX,\dimV}
        \right).
\end{align*}
We consider the $\dimX\dimV$-dimensional SDE
\begin{align}
    d  U^{\dimX\dimV}_{t} 
    =
    B  U^{\dimX\dimV}_{t} dt +
    A  U^{\dimX\dimV}_{t} dW_t, \quad
     U^{\dimX\dimV}_{0}=\Phi^{\dimX\dimV},
    \label{eq:SPDEvec}
\end{align}
where $A$ and $B$ are $(\dimX\dimV \times \dimX\dimV)$-matrices, which will be defined via space-finite differences in the next paragraph in a way that $U^{\dimX\dimV}_{t}\approx\vectorize\left( u_t^{\dimX,\dimV}   \right)$ with respect to a suitable norm.
This equation can be solved by first computing its fundamental solution, then by multiplying it with the initial datum, i.e.
\begin{align*}
    dX_t &= B X_t dt + A X_t dW_t, \quad X_0 = I \in \R^{\dimX\dimV\times\dimX\dimV},\\
     U^{\dimX\dimV}_{t} &= X_t \Phi^{\dimX\dimV}. 
\end{align*}
The fundamental solution now can be approximated with the Magnus expansion, i.e. $X_t\approx \exp\left(Y_t\right)$.
Since $A$ and $B$ 
will be very large in this case, we will utilize the sparsity of $A$ and $B$, as well as using a special algorithm specifically designed to compute the matrix-exponential times a vector denoted by \verb+expmvtay2+, which does not need to compute the whole matrix-exponential first. This is crucial for the implementation and explained in further detail in \cite{GPUExpmv2022}. The approximation formulas for $X_t$, $Y_t$ and their derivation are deferred to Appendix \ref{sec:MagnusFormulas}.

\paragraph*{Computation of $A$ and $B$.}

The idea is to discretize the first and second-order derivatives we will use central differences with
zero-boundary conditions.
Therefore, let us introduce the following matrices corresponding to the finite differences
\begin{align*}
    D^{x} &\coloneqq \frac{1}{2\Delta x} \tridiag^{\dimX,\dimX}\left(-1,0,1\right),&
    D^{v} &\coloneqq \frac{1}{2\Delta v} \tridiag^{\dimV,\dimV}\left(-1,0,1\right),\\
    D^{xx} &\coloneqq \frac{1}{\left(\Delta x\right)^2} \tridiag^{\dimX,\dimX}\left(1,-2,1\right),&
    D^{vv} &\coloneqq \frac{1}{\left(\Delta v\right)^2} \tridiag^{\dimV,\dimV}\left(1,-2,1\right).\\
\end{align*}
Additionally, we will introduce the following matrices corresponding to the coefficient functions on the discretized spatial grid
\begin{align*}
    Z^{w} &\coloneqq \left(z^w(x_i,v_j)\right)_{\substack{i=1,\dots,\dimX \\j=1,\dots,\dimV}}, &
    \Sigma^{w} &\coloneqq \left(\sigma^{w}(x_i,v_j)\right)_{\substack{i=1,\dots,\dimX \\j=1,\dots,\dimV}}
\end{align*}
for $Z=F,G,H$, $z=f,g,h$, respectively, and $w\in \left\{x,v,xx,xv,vv\right\}$.



Let us start with discretizing the first-order derivative with respect to $x$. First, we replace the partial derivative by the first-order central differences and assume zero-boundary conditions, leading to
\begin{align*}
    f^x(x_i,v_j)\partial_x u_t(x_i,v_j)&\approx
    f^x(x_i,v_j) \frac{u_t(x_{i+1},v_j)-u_t(x_{i-1},v_j)}{2\Delta x}
\end{align*}
for all $i=1,\dots,\dimX$ and $j=1,\dots,\dimV$.
As aforementioned, we need to extract the correct coefficient matrix for the vectorized equation \eqref{eq:SPDEvec}.

In our notations, a derivative in $x$ is a multiplication of the corresponding finite-difference matrix from the left to $u_t^{\dimX,\dimV}$, i.e.
\begin{align*}
    \left(\frac{u_t(x_{i+1},v_j)-u_t(x_{i-1},v_j)}{2\Delta x}\right)_{\substack{i=1,\dots,\dimX \\j=1,\dots,\dimV}}=
    D^x u_t^{\dimX,\dimV}.
\end{align*}
Using the Kronecker product, it is well-known for compatible matrices $D_1 U D_2 = C$
that
\begin{align*}
    \vectorize\left(C\right)=
    \vectorize\left(D_1 U D_2\right)=
    \left(D_2^T \otimes D_1\right)\vectorize\left(U\right).
\end{align*}
In our case, this leads to
\begin{align*}
    \vectorize\left(D^x u_t^{\dimX,\dimV}\right)=
    \left(I_{\dimV}\otimes D^x\right) U_t^{\dimX\dimV}.
\end{align*}

Now, we need to deal with the coefficients as well. Denoting by $\odot$ the
Hadamard, or elementwise, product, it is easy to see that
\begin{align*}
    \vectorize\left(F^x \odot \left(D^x  u_t^{\dimX,\dimV}\right)\right)=
    \vectorize\left(F^x \right)\odot
    \vectorize\left(\left(D^x   u_t^{\dimX,\dimV}\right)\right)=
    \diag\left(\vectorize\left(F^x \right)\right) 
    \vectorize\left(\left(D^x   u_t^{\dimX,\dimV}\right)\right).
\end{align*}
Using these two observations together yields
\begin{align*}
    \left(f^x(x_i,v_j)\partial_x u_t(x_i,v_j)\right)_{\substack{i=1,\dots,\dimX \\j=1,\dots,\dimV}}\approx
    \diag\left(\vectorize\left(F^x\right)\right) \left(I_{\dimV}\otimes D^x\right)   U_t^{\dimX\dimV}.
\end{align*}
This reasoning holds true for all other derivatives as well, i.e. an operation in $x$ is a matrix multiplication from the left and in $v$ it is the matrix multiplication from the right with the transposed matrix.

Conclusively, we have
    \begin{align*}
    B&\coloneqq
    \diag\left(\vectorize\left(H\right)\right)
    +
    \diag\left(\vectorize\left(F^x\right)\right) \left(I_{\dimV}\otimes D^x\right)
    +
    \diag\left(\vectorize\left(F^v\right)\right) \left(D^v\otimes I_{\dimX}\right)
    \\&\quad+
    \frac{1}{2}\diag\left(\vectorize\left(G^{xx}\right)\right) \left(I_{\dimV}\otimes D^{xx}\right)
    +
    \diag\left(\vectorize\left(G^{xv}\right)\right) \left(D^v\otimes D^x\right)
    \\&\quad+
    \frac{1}{2}\diag\left(\vectorize\left(G^{vv}\right)\right) \left(D^{vv}\otimes I_{\dimX}\right),\\
    A&\coloneqq
    \diag\left(\vectorize\left(\Sigma\right)\right)
    +
    \diag\left(\vectorize\left(\Sigma^{x}\right)\right) \left(I_{\dimV}\otimes D^x\right)
    +
    \diag\left(\vectorize\left(\Sigma^{v}\right)\right) \left(D^v\otimes I_{\dimX}\right).
\end{align*}
\begin{figure}%
\includegraphics[width=.32\columnwidth]{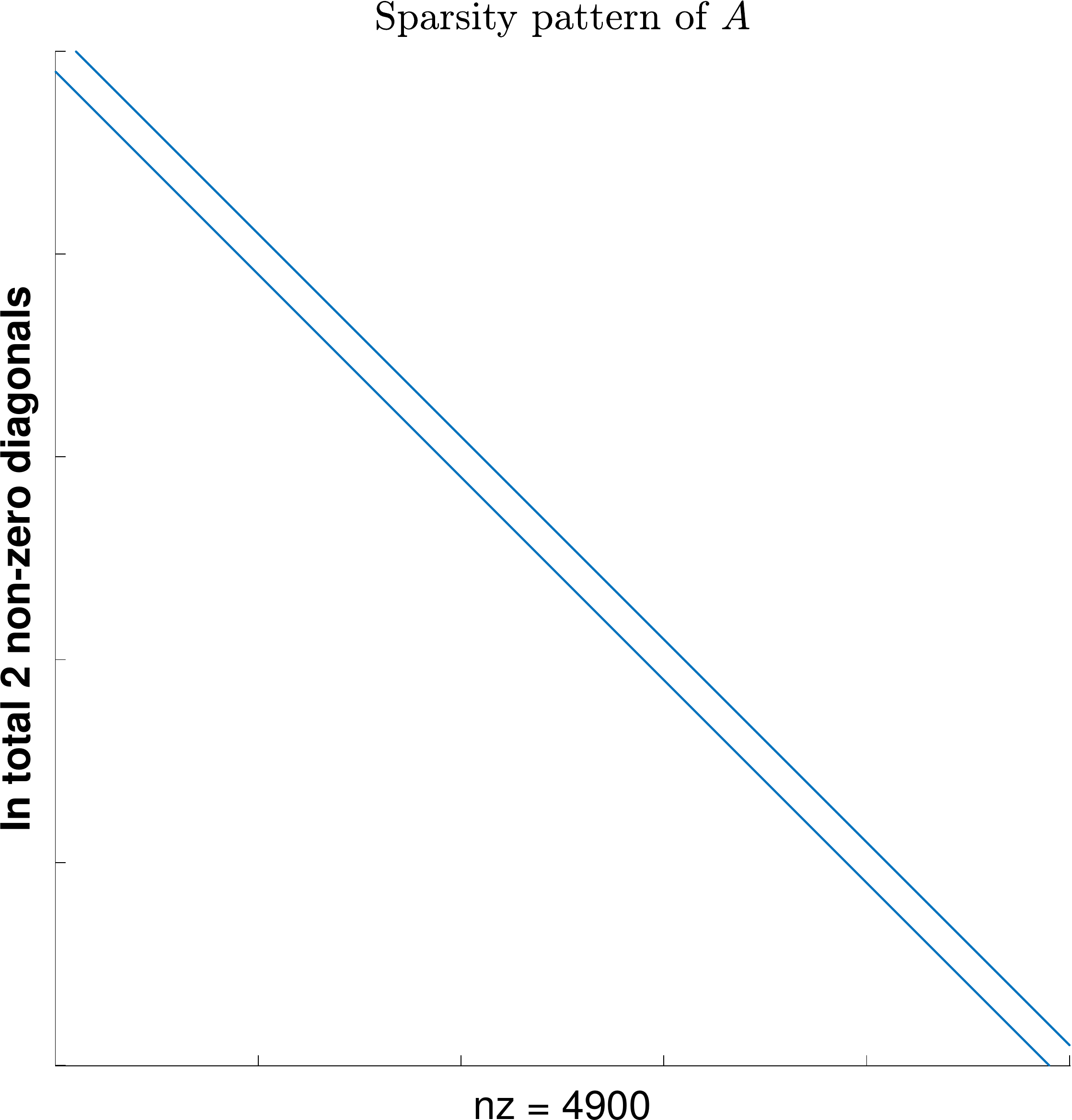}%
\includegraphics[width=.32\columnwidth]{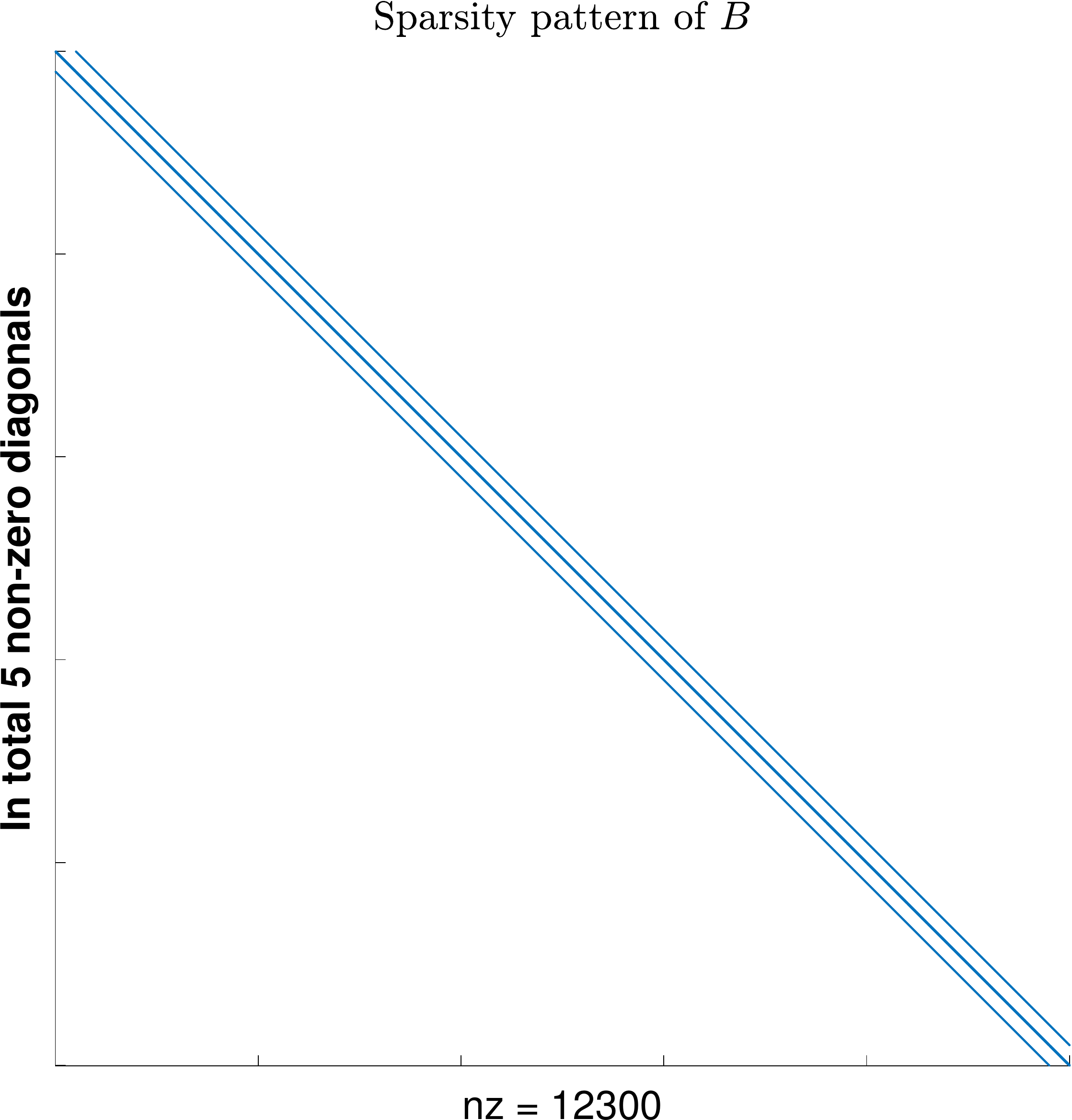}%
\includegraphics[width=.32\columnwidth]{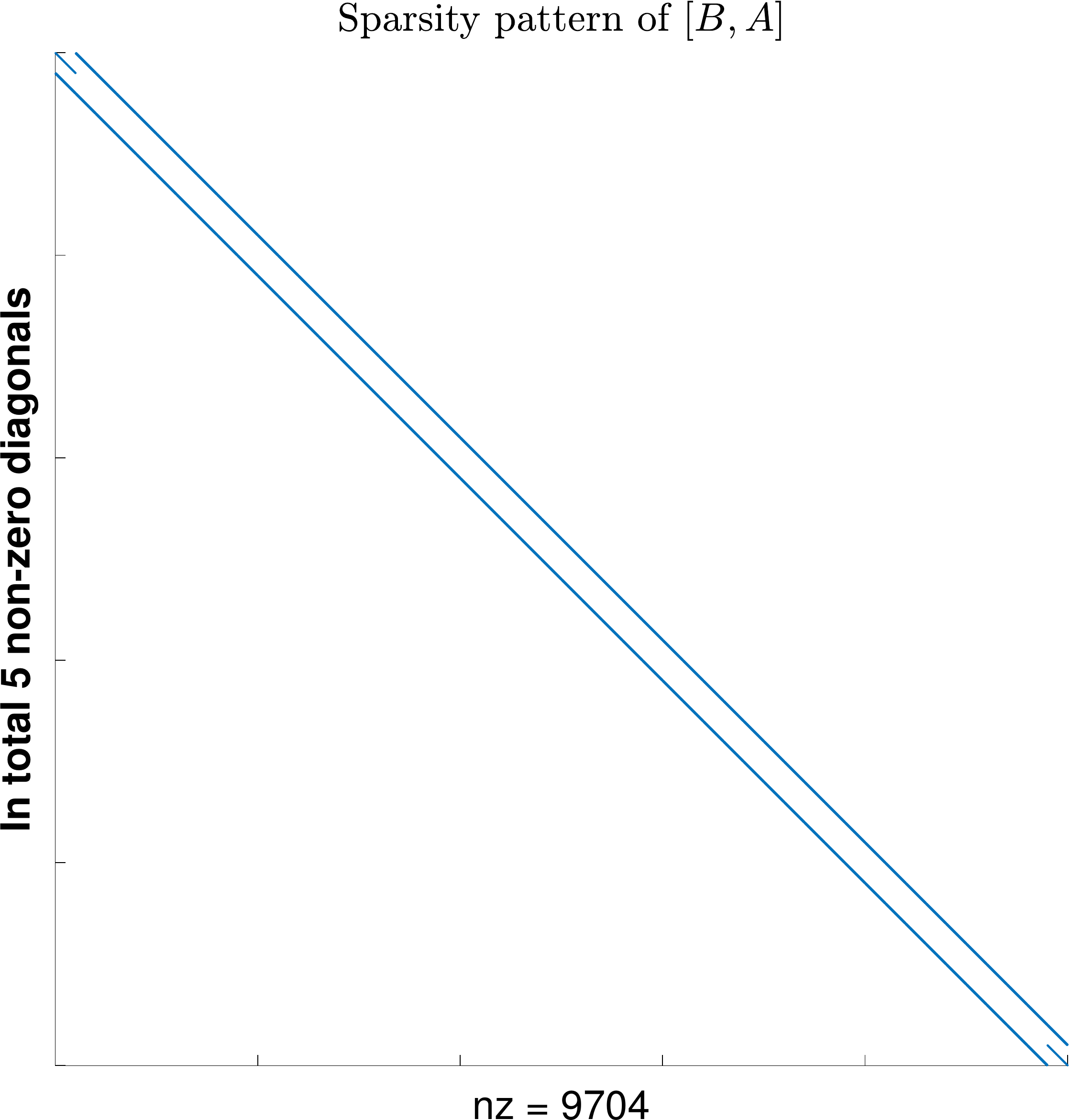}%

\includegraphics[width=.49\columnwidth]{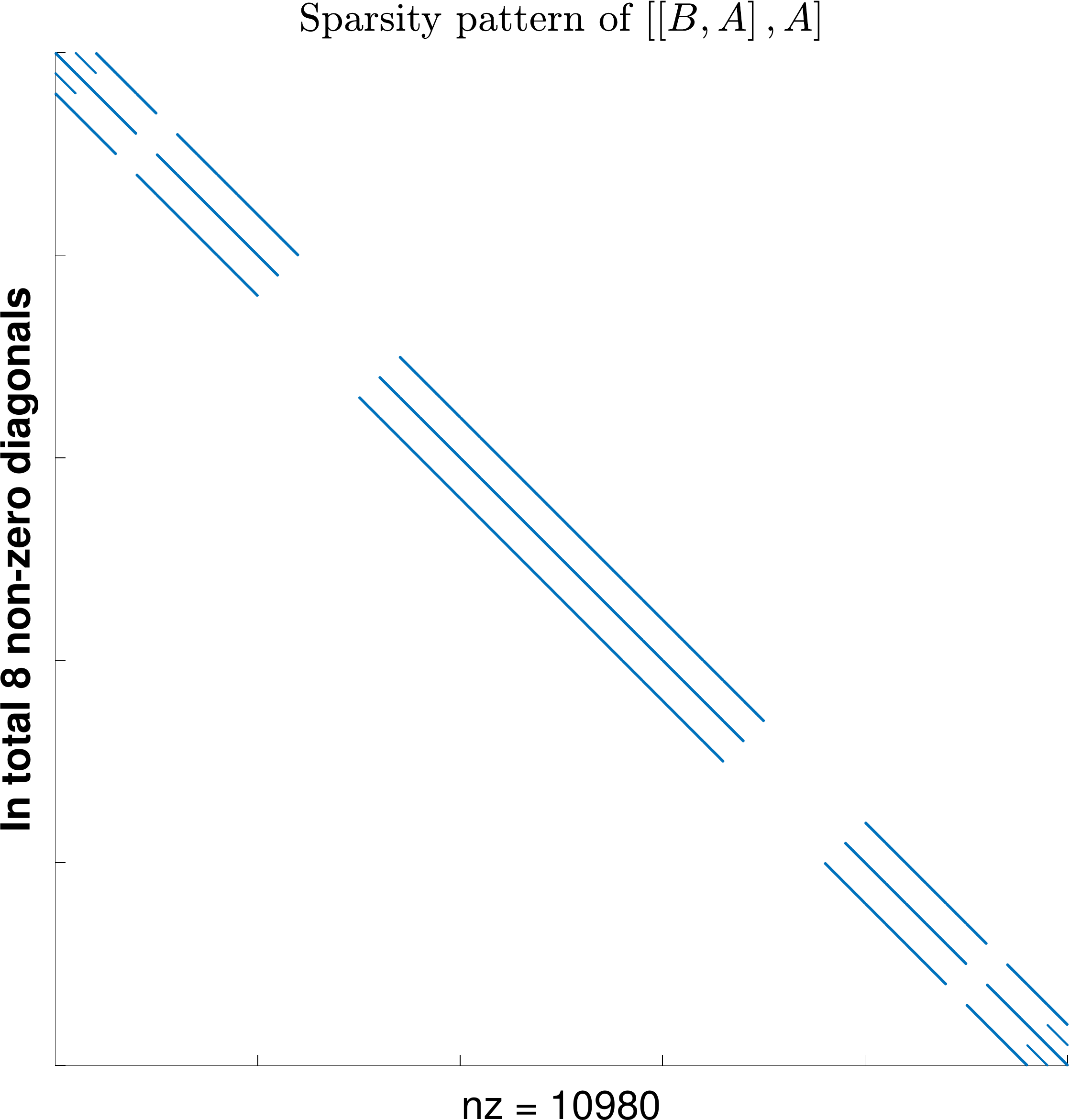}%
\includegraphics[width=.49\columnwidth]{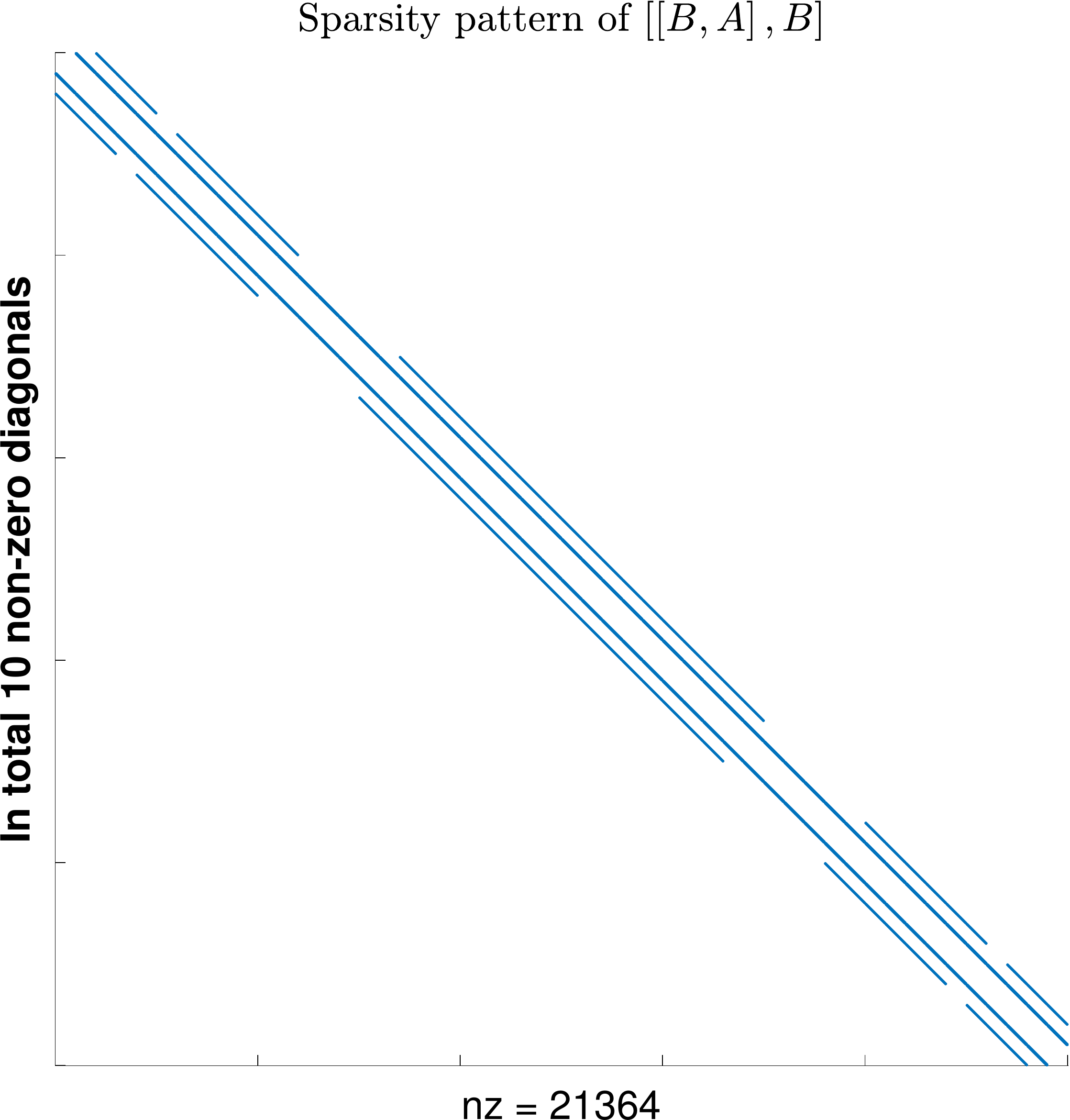}%
\caption{Sparsity patterns of $A$, $B$ and commutators of the SPDE from Section \ref{sec:stochasticLangevin} with coefficients \eqref{eq:coeffsLangevinConst} and $d=50$. nz stands for the number of non-zero entries.}%
\label{fig:sparsity}%
\end{figure}
\begin{remark}\label{rem:sparsity}%
    We can see that the sparsity of $B$ and $A$ is mostly determined by the finite-differences matrices due to the elementwise product of the coefficient functions.
    The coefficient functions can further reduce or increase the sparsity by zero
    entries or cancellations. Additionally, this implies that the number of non-zero diagonals is independent of the size of $B$ and $A$
    respectively even though the density of non-zero elements will decline for increasing dimensions.
    %

    An illustration of
    the sparsity pattern can be seen in Figure \ref{fig:sparsity}:
    In the upper left corner is the pattern for $A$, followed by $B$, $\left[B,A\right]$, $\left[\left[B,A\right],A\right]$ and $\left[\left[B,A\right],B\right]$. The blue lines represent non-zero entries. The pictures are ordered by the number of non-zero diagonals, i.e. a diagonal with at least one non-zero entry, which are 2, 5, 5, 8 and 10 respectively.
    We can see that the sparsity decreases with the order of the commutator. 
    In Section \ref{sec:stochasticLangevinVar} we will see that one consequence of the reduced sparsity is a decrease in computational efficiency of the Magnus expansion.
\end{remark}
\paragraph*{Iterated Magnus expansion}
\begin{figure}%
\centering
\includegraphics[width=.5\columnwidth]{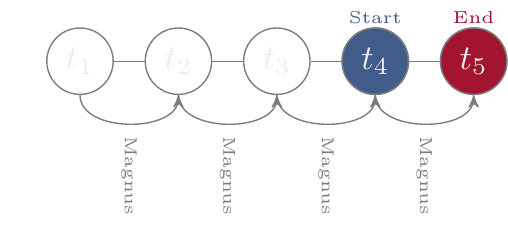}%
\caption{Schematic of the iterative Magnus scheme}%
\label{fig:iteratedMagnus}%
\end{figure}

An advantage of the Magnus expansion over pure time-iterative methods is that, as long as it is convergent, it does not need a previous time-iteration to compute the next time step.
However, as we discussed in the introduction, the convergence time-interval can be small for the Magnus expansion. Therefore, if $T>0$ is larger than the convergence radius, it is necessary to split the initial time interval $[0,T]$
into smaller sub-intervals $(0,t_1], (t_1,t_2], \dots, (t_{n},T]$. As illustrated in
Figure \ref{fig:iteratedMagnus}, we will evaluate the Magnus expansion consecutively on each of them, i.e. use the terminal evaluation as the initial point of the next sub-interval.

We will call this method \emph{iterated Magnus expansion}.
On each sub-interval, the Magnus expansion still has the usual parallel-in-time features. Furthermore, due to its relatively large convergence region in time, we can use fewer iterations compared to other iterative methods, e.g. for the Euler-Maruyama scheme.

\paragraph*{Euler-Maruyama}
In this case, we {do not need to vectorize the equation but we} 
have to discretize the time-derivative as well. With the same notation and reasoning from above we obtain
\begin{align}
    u_{t_{k+1}}^{\dimX,\dimV}&\approx
    u_{t_{k}}^{\dimX,\dimV}+
    \Biggl(
        H \odot u_{t_{k}}^{\dimX,\dimV}+
        F^x \odot \left(D^x u_{t_{k}}^{\dimX,\dimV}\right)+
        F^v \odot \left(u_{t_{k}}^{\dimX,\dimV}\left(D^v\right)^T\right)\\&\qquad+
        \frac{1}{2}G^{xx} \odot \left(D^{xx} u_{t_{k}}^{\dimX,\dimV}\right)+
        G^{xv} \odot \left(D^x u_{t_{k}}^{\dimX,\dimV} \left(D^v\right)^T\right)+
        \frac{1}{2}G^{vv} \odot \left(u_{t_{k}}^{\dimX,\dimV}\left(D^{vv}\right)^T\right)
    \Biggr)\Delta t\\&\quad+
    \Biggl(
        \Sigma \odot u_{t_{k}}^{\dimX,\dimV}+
        \Sigma^{x} \odot \left(D^x u_{t_{k}}^{\dimX,\dimV} \right)+
        \Sigma^{v} \odot \left(u_{t_{k}}^{\dimX,\dimV} \left(D^v\right)^T\right)
    \Biggr)
    \Delta W_{t_k},
    \label{eq:EulerMaruyama}
\end{align}
where $\Delta t: = t_{k+1}-t_{k} >0$ for any $k$ and $\Delta W_{t_k}:=W_{t_{k+1}}-W_{t_k}$.
In the case of separable coefficients the Hadamard product can be replaced by matrix products with diagonal matrices from the left and right.
\section{Numerical experiments}\label{sec:numerics}

We present here some numerical tests that demonstrate how the Magnus expansion can be efficiently applied  to approximate the solutions of stochastic partial differential equations (SPDEs) with two spatial variables.

In this paper, we will focus on the stochastic Langevin equation. First, we consider the case of constant coefficients. This enables us to use an exact solution as a benchmark. We will perform experiments to test the accuracy of the approximate solutions, elaborate on the computational times and discuss effects of the spatial boundaries.
Afterwards, we will consider the stochastic Langevin equation with non-constant coefficients.

To this end, we will first elaborate on the implementation of the code and introduce some notation and norms for the error-analysis.

\paragraph*{Implementation}
As mentioned beforehand, we implement the algorithm iteratively as illustrated in Figure \ref{fig:iteratedMagnus}. Now, we explain how to implement
the algorithm for one time interval in \matlab. For this, let us have a close look at \eqref{eq:MagnusFormulas}. We can see that, in this particular case, the computations of the Lebesgue-integrals are decoupled from the coefficients $A$ and $B$. This makes it possible to evaluate the Lebesgue-integrals for all trajectories in parallel by using vectorization. In particular, we chose simple Riemann-sums to compute the integrals.

Now, we need to compute the commutators of the scheme. We found that for larger problems, e.g. $d\geq 100$ it is faster to copy $A$ and $B$ to the device and compute the commutators there.
As it turns out, \matlab as of now does not support three-dimensional sparse matrices. Therefore, we decided to loop over all desired time evaluations (for our experiments only the terminal time) within the current sub-interval and over all simulations. Since, the Magnus expansion is parallel-in-time and parallel-in-simulations in each iteration, we use a threading environment to keep a single GPU busy with computing the expansion formulas and afterwards the matrix-vector exponential. This is also more efficient in terms of memory usage than using vectorization. As a side-note, it is straightforward to use multiple GPUs.

For $d\leq 100$ we use CPUs only because it is faster than copying to device in each iteration. For larger increasing spatial resolution the advantage of a single GPU increases more and more compared to CPUs only.

The choice of algorithm for evaluating the matrix-vector exponentials turned out to be crucial. We tested \verb+expmv+ by \cite{Higham2011} and \verb+expmvtay2+ by \cite{GPUExpmv2022}, as well as a Krylov-subspace implementation called \verb+expv+.
For our purposes, \verb+expmvtay2+ was more accurate and significantly faster than the other methods. Their algorithm makes it possible to utilize the sparsity of the problem and is GPU-applicable without first computing the entire matrix exponential.

We tried to improve the performance of the Euler scheme as much as possible and tested different ways to compute the matrix multiplications within each iteration of the scheme and only save the values at the terminal points for a further speed-up. For the matrix multiplications we tested full and sparse matrix multiplications on GPU and CPUs. For computations using sparse matrices, one has to loop over the simulations, since there is no analogue to \verb+pagemtimes+ in the full case.
Copying from host to device with our test cases was more expensive than a pure CPU implementation and the low-level multi-threading of \verb+pagemtimes+ surpassed the benefit of sparsity as well. Therefore, we use full matrix multiplication using all available CPU cores. On the machine we use, we could choose between 1 and 64 cores. There was no benefit after using 12 cores, which we use for all of our experiments.

We fix in our tests the number of simulations to $M=100$. If we increase the number of simulations, both methods will benefit from more available computer hardware as expected. However, even for $M=100$, the Magnus method would benefit immediately from another GPU, since the computations of matrix-vector exponentials are costly, which is the bottleneck of our implementation.

\paragraph*{Error and notations}\label{sec:Error}
For the numerical error analysis we will make use of the following notations.
We denote by $U^{\text{ref}}$ and
by $U^{\text{app}}$ a benchmark and an approximate solution, respectively.

Henceforth, we will choose $\gridV=\gridX$ symmetric, centered around zero and set 
$d\coloneqq\dimX=\dimV$ to make our analysis a bit easier.
Moreover, we choose the cut-off region of $\R^2$ as $\lowerX=\lowerV=-4$ and $\upperX=\upperV=4$ in all experiments.

Furthermore, we want to study the impact of the zero-boundary condition and therefore will use only a central part with varying size of the whole solution matrix at a given time, which is illustrated in Figure \ref{fig:kappa}. To vary the size we introduce a new parameter $\kappa=0,1,\dots$ indicating the $2^{-\kappa}$-th central part of the solution matrix $U_t^d$, which we will consider for our error analysis and denote the corresponding truncated matrix by
$U_t^{\text{ref},d,\kappa}, U_t^{\text{app},d,\kappa}\in \R^{\left\lfloor \frac{d}{2^\kappa}\right\rfloor \times \left\lfloor \frac{d}{2^\kappa}\right\rfloor}$. Also we set
\begin{align*}
    I^\kappa\coloneqq
        \left\{
            \left\lfloor \frac{d}{2}-\frac{d}{2^{\kappa+1}}\right\rfloor,\dots,
            \left\lfloor \frac{d}{2}+\frac{d}{2^{\kappa+1}}\right\rfloor
        \right\}
\end{align*}
to collect the corresponding indices.

\tikzset{kappaNode1/.style={draw=none, rectangle, align=center,text width=2.5cm}}
\tikzset{kappaLine1/.style={thick}}
\tikzset{kappaHorLine1/.style={draw=red,kappaLine1}}
\tikzset{kappaVertLine1/.style={draw=blue,kappaLine1}}
\tikzset{
    ncbar angle/.initial=90,
    ncbar/.style={
        to path=(\tikztostart)
        -- ($(\tikztostart)!#1!\pgfkeysvalueof{/tikz/ncbar angle}:(\tikztotarget)$)
        -- ($(\tikztotarget)!($(\tikztostart)!#1!\pgfkeysvalueof{/tikz/ncbar angle}:(\tikztotarget)$)!\pgfkeysvalueof{/tikz/ncbar angle}:(\tikztostart)$)
        -- (\tikztotarget)
    },
    ncbar/.default=0.5cm,
}

\tikzset{square left brace/.style={ncbar=0.5cm}}
\tikzset{square right brace/.style={ncbar=-0.5cm}}

\tikzset{round left paren/.style={ncbar=0.5cm,out=120,in=-120}}
\tikzset{round right paren/.style={ncbar=0.5cm,out=60,in=-60}}
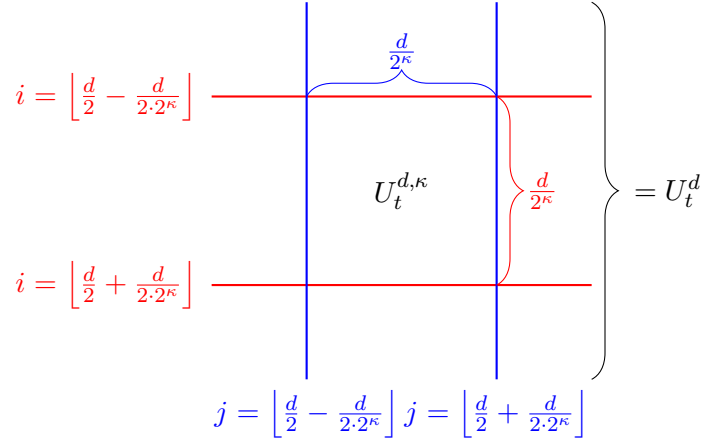
\begin{figure}%
\centering
\begin{tikzpicture}
    \node[kappaNode1] (center) {$U_t^{d,\kappa}$};
    \draw[kappaHorLine1,name path=horUpper]
        ($(center)+(-2.5,1.25)$) node[kappaNode1, color = red, anchor=east] (HUL)
            {$i=\left\lfloor \frac{d}{2}-\frac{d}{2 \cdot 2^\kappa}\right\rfloor$} --
        ($(center)+(2.5,1.25)$) node[kappaNode1] (HUR) {};
    \draw[kappaHorLine1,name path=horLower]
        ($(center)+(-2.5,-1.25)$)  node[kappaNode1, color = red, anchor=east] (HLL)
            {$i=\left\lfloor \frac{d}{2}+\frac{d}{2 \cdot 2^\kappa}\right\rfloor$} --
        ($(center)+(2.5,-1.25)$) node[kappaNode1] (HLR) {};
    \draw[kappaVertLine1,name path=vertLeft]
        ($(center)+(-1.25,-2.5)$)  node[kappaNode1, color = blue, anchor=north] (VLL)
            {$j=\left\lfloor \frac{d}{2}-\frac{d}{2 \cdot 2^\kappa}\right\rfloor$} --
        ($(center)+(-1.25,2.5)$) node[kappaNode1] (VUL) {};
    \draw[kappaVertLine1,name path=vertRight]
        ($(center)+(1.25,-2.5)$) node[kappaNode1, color = blue, anchor=north] (VLR)
            {$j=\left\lfloor \frac{d}{2}+\frac{d}{2 \cdot 2^\kappa}\right\rfloor$}--
        ($(center)+(1.25,2.5)$) node[kappaNode1] (VUR)  {};
    \path[name intersections={of=horUpper and vertLeft,by=UL}];
    \path[name intersections={of=horUpper and vertRight,by=UR}];
    \path[name intersections={of=horLower and vertLeft,by=LL}];
    \path[name intersections={of=horLower and vertRight,by=LR}];
    \draw [red,decorate,decoration={brace,amplitude=10pt},xshift=-4pt,yshift=0pt]
        (UR) -- (LR) node [red, midway,xshift=0.6cm] {$\frac{d}{2^\kappa}$};
    \draw [black,decorate,decoration={brace,amplitude=10pt},xshift=-4pt,yshift=0pt]
        ($(center)+(2.5,2.5)$) -- ($(center)+(2.5,-2.5)$)
            node [black, midway,xshift=1cm] {$=U_t^d$};
    \draw [blue,decorate,decoration={brace,amplitude=10pt},xshift=-4pt,yshift=0pt]
        (UL) -- (UR) node [blue, midway,yshift=0.6cm] {$\frac{d}{2^\kappa}$};
\end{tikzpicture}%
\caption{Graphical representation of $U_t^{d,\kappa}$ compared to $U_t^d$ for the error analysis to disregard boundary effects.}%
\label{fig:kappa}%
\end{figure}

For our error analysis we will use the following three norms:
\begin{align}
    \meanErr_t^{d,\kappa}&\coloneqq
    \frac{1}{M}\sum_{m=1}^{M}{
        \abs{U_{t,m}^{\text{ref},d,\kappa}-U_{t,m}^{\text{app},d,\kappa}}
    }\in \R^{\left\lfloor \frac{d}{2^\kappa}\right\rfloor {\color{blue}\times} \left\lfloor \frac{d}{2^\kappa}\right\rfloor}\\
    \avgMeanErr_t^{d,\kappa}&\coloneqq
    \frac{1}{\abs{I^\kappa}^2}
        \sum_{i,j}{
          \big(  \meanErr_t^{d,\kappa} \big)_{ij}
        }
    \in\R\\
    \Err_t^{d,\kappa}&\coloneqq
    \frac{1}{M}\sum_{m=1}^{M}{
        \frac{
            \norm{
                U_{t,m}^{\text{ref},d,\kappa}-
                U_{t,m}^{\text{app},d,\kappa}
            }_{F}
        }{
            \norm{
                U_{t,m}^{\text{ref},d,\kappa}
            }_{F}
        } \in \R
    },
\label{eq:meanErr}
\end{align}
where $\norm{\cdot}_F$ denotes the Frobenius norm.
The first one is a matrix consisting of a mean absolute error between the reference solution and the approximation for each point in the grid $I^\kappa \times I^\kappa$ by taking the mean over all trajectories.
The second error is the average of the first error for the corresponding grid indicated by $\kappa$. The larger $\kappa$ the further away we are from the spatial boundary,
as illustrated in Figure \ref{fig:kappa}.
The third error is the mean of a relative error between the reference solution and the approximation taking all points in the region corresponding to $\kappa$ into account. It will serve as our main error norm in this paper.

Regarding the Lebesgue discretization $\LebesgueDelta$ we have observed that
the computational times are more or less constant for $\LebesgueDelta\in \left[10^{-5},10^{-2}\right)$ and increase for lower $\LebesgueDelta$ significantly. This can be explained by the fact that the discretization is only used to compute Lebesgue-integrals of the form $\int_{0}^{t}{s^p W_s^q ds}$
. Therefore, the computation can be vectorized, which is very fast compared to the computation of the Magnus logarithm and the matrix-vector exponentials for $\LebesgueDelta\geq 10^{-5}$.

Since, there seems to be no computational drawback, we suggest to use a Lebesgue-discretization equal to $10^{-3}$ or $10^{-4}$ and fix it for all tests to $\LebesgueDelta = 10^{-4}$.

We have verified a linear behavior (with a slope less than one until the GPU is fully saturated) in the number of simulations $M$. In our experiments, we decided to use $M=100$ simulations and display always the average computational times for one simulation.

Also the computational effort with respect to the finite time horizon $T$ scales linearly for all methods. Thus, we use $T=1$ as our terminal time.

\begin{table}%
\caption{Notations for the numerical experiments.}
\centering
\begin{tabular}{ll}
    \euler & Euler-Maruyama scheme \eqref{eq:EulerMaruyama}\\
    \mOne & Iterated Magnus scheme of order 1\\
    \mTwo & Iterated Magnus scheme of order 2\\
    \mThree & Iterated Magnus scheme of order 3\\
    M & Number of simulations\\
        d & Number of grid points in $\mathbb{X}$ and $\mathbb{V}$\\
    $\stepSize$ & step-size of \euler or Magnus\\
    $\LebesgueDelta$ & discretization of the Lebesgue-integrals for Magnus\\
    M2, x & Magnus order 2 with step-size $\stepSize = x$\\
        M3, x & Magnus order 3 with step-size $\stepSize = x$\\
    E, x & \euler with step-size $\stepSize = x$
\end{tabular}
\label{tab:notations}
\end{table}
Throughout the experiments we will use the notation in Table \ref{tab:notations}.
We used for the calculations \matlab with the \matlabParalleltoolbox
running on \OS, on a machine with the following specifications: processor
\CPU, \RAM and a \GPU.


The next subsection is structured as follows: First, we derive the explicit solution of the Langevin equation. Then, we discuss the impact of the number of intervals $\stepSize$ for the iterated Magnus scheme regarding computational times and errors. Next, we look at the boundary effects over time. This is followed by a comparison of the Magnus scheme with the Euler-Maruyama scheme with different sizes of the space grid.

\subsection{The Magnus expansion for the stochastic Langevin equation with constant coefficients}\label{sec:stochasticLangevin}
In this subsection, we apply the Magnus expansion to the stochastic Langevin equation. For further details and a solution theory in Hölder spaces under the weak Hörmander condition we refer the reader to
\cite{Pascucci2022}.

In the constant coefficient case, the Langevin SPDE can be recovered from 
\eqref{eq:generalSPDE} setting 
\begin{align}
    h\equiv f^v\equiv g^{xx}\equiv g^{xv}\equiv \sigma \equiv \sigma^{x}\equiv 0,\quad
    f_x(x,v) \coloneqq -v, \quad g^{vv}\equiv a \in \R_{>0}, \quad \sigma^v(x,v)\equiv \sigma \in \R_{>0}.
    \label{eq:coeffsLangevinConst}
\end{align}
In this special case, there exists an explicit fundamental solution $\Gamma$ for $0<\sigma < \sqrt{a}$ (cf. \cite[p.~4 Proposition 1.1.]{Pascucci2022}), which is given by
\begin{align*}
    \Gamma\left(t,z;0,\zeta\right)&\coloneqq
    \Gamma_0\left(t,z-m_t(\zeta)\right),\\
    \Gamma_0\big(t,(x,v)\big)&\coloneqq
    \frac{
        \sqrt{3}
    }{
        \pi t^2 (a-\sigma^2)
    }
    \exp\left(
        -\frac{2}{a-\sigma^2}
            \left(
                \frac{v^2}{t}-\frac{3vx}{t^2}+\frac{3x^2}{t^3}
            \right)
    \right)
\end{align*}
where $\zeta \coloneqq \left(\xi,\eta\right)$ is the initial point and
\begin{align*}
    m_t(\zeta)\coloneqq
    \left(
        \begin{array}[c]{c}
            \xi + t \eta - \sigma \int_{0}^{t}{W_s ds}\\
            \eta - \sigma W_t
        \end{array}
    \right).
\end{align*}
Having the fundamental solution, we can solve the Cauchy-problem by integrating against the initial datum, i.e.
\begin{align*}
    u_t(z)= \int_{\R^2}{\Gamma(t,
    z;0,
    \zeta) \phi(
    \zeta) d\zeta, \qquad z=(x,v). 
    }
\end{align*}
To get an explicit solution (up to the stochastic integral $\int_{0}^{t}{W_s ds}$) for the double integral we will choose $\phi$ to be Gaussian, i.e.
\begin{align}\label{eq:initialdatum}
    \phi\left(\xi,\eta\right)\coloneqq
    \exp\left(
        -\frac{\left(\xi^2+\eta^2\right)}{2}
    \right).
\end{align}
The formula for the exact solution is lengthy and its specific form is not instructive for the following experiments, therefore we decided to exclude it from this presentation. The interested reader can find it in the corresponding \matlab code, which is publicly available.

Having an exact benchmark solution we will now perform some numerical tests to judge the performance of the iterated Magnus scheme.
Henceforth, the parameters for the stochastic Langevin equation will be $a=1.1$ and $\sigma=\frac{1}{\sqrt{10}}$, so that $a-\sigma^2=1>0$.

\paragraph*{Computational effort and errors with respect to the number of iterations.}
For this experiment we fix the number of grid points in each space grid to $d=200$ but vary
step-size of the Magnus scheme $\stepSize$.
In Figure \ref{fig:StepTest} we can see the corresponding results. The left y-axis shows the average computational times for one simulation in a log scale and the right y-axis the mean relative errors $\Err_T^{d,4}$ also in a log scale. The computational times (in seconds) of \mTwo are depicted in light blue and of \mThree in dark blue. Moreover, the mean relative errors for \mTwo are orange and for \mThree red.
\begin{figure}[H]%
\centering
\includegraphics[width=\columnwidth]{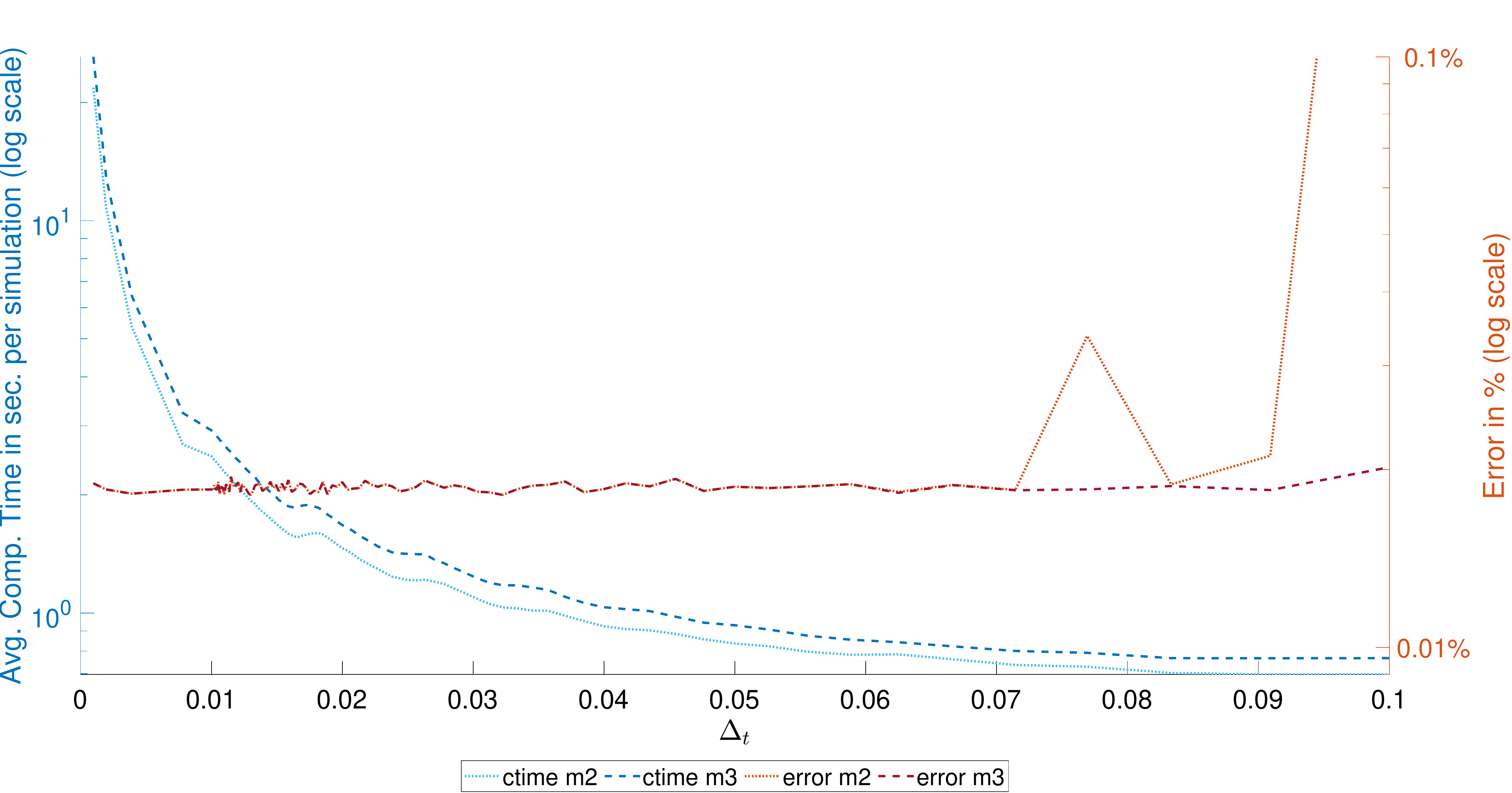}%
\caption{Constant coefficients as in \eqref{eq:coeffsLangevinConst}: Computational times and errors of the Magnus expansion for varying number step-size $\stepSize$ with fixed spatial dimension $d=200$.}%
\label{fig:StepTest}%
\end{figure}
We can see that the mean relative errors start to fluctuate for a step-size larger than
0.065 for \mTwo and see an explosion of \mTwo around 0.085.
For \mThree this phenomenon starts outside of the picture. The fluctuations begin right after $\stepSize=0.1$ and an explosion can be seen after $\stepSize=0.33$.
The explosions for large step-sizes are not surprising, since the step-size is determined by the underlying stopping times for the convergence of the Magnus scheme. The fluctuations beforehand are most likely due to an interplay between error propagations due to larger step-sizes and the necessary Taylor-terms in \verb+expmv+, and it indicates that \mThree is more stable than \mTwo. Therefore, experiment indicates that any step-size less than 0.05, 0.1 for $d=200$ is well within the convergence radius of \mTwo, \mThree, respectively, and yields stable results.

For other spatial dimensions $d$ this breaking point might be different. Moreover, we can see that the computational time increases more and more for smaller step-size, while the error for both methods stays almost constant and close to each other.

This suggests that one should choose the step-size as large as possible for the iterated Magnus scheme to gain the maximal performance. However, being too greedy will lead to blow-ups of some trajectories.

Also as a side note, usually with increasing spatial dimension $d$, one has to choose a smaller time step-size for the Magnus methods as well: this will be shown in Figure \ref{fig:dim100}--\ref{fig:dim300}.
\FloatBarrier

\paragraph*{Mean errors and boundary effects over time.}
For this experiment we fix the grid points in each space grid to $d=300$.

In Figure \ref{fig:errorLangevinConst} we can see the mean absolute errors of the entire spatial grid as a two-dimensional plot. A deep blue color indicates a small error and a bright yellow color an error up to $10^{-2}$. The black rectangle is the corresponding region for $\kappa=1$. The black number within the rectangle is the average mean absolute error of the corresponding region. The picture on the left-hand side is the area of errors
at $t=0.25$ and on the right-hand side at $t=1$.
%

\begin{figure}%
\centering
\includegraphics[width=.49\columnwidth,page=10]{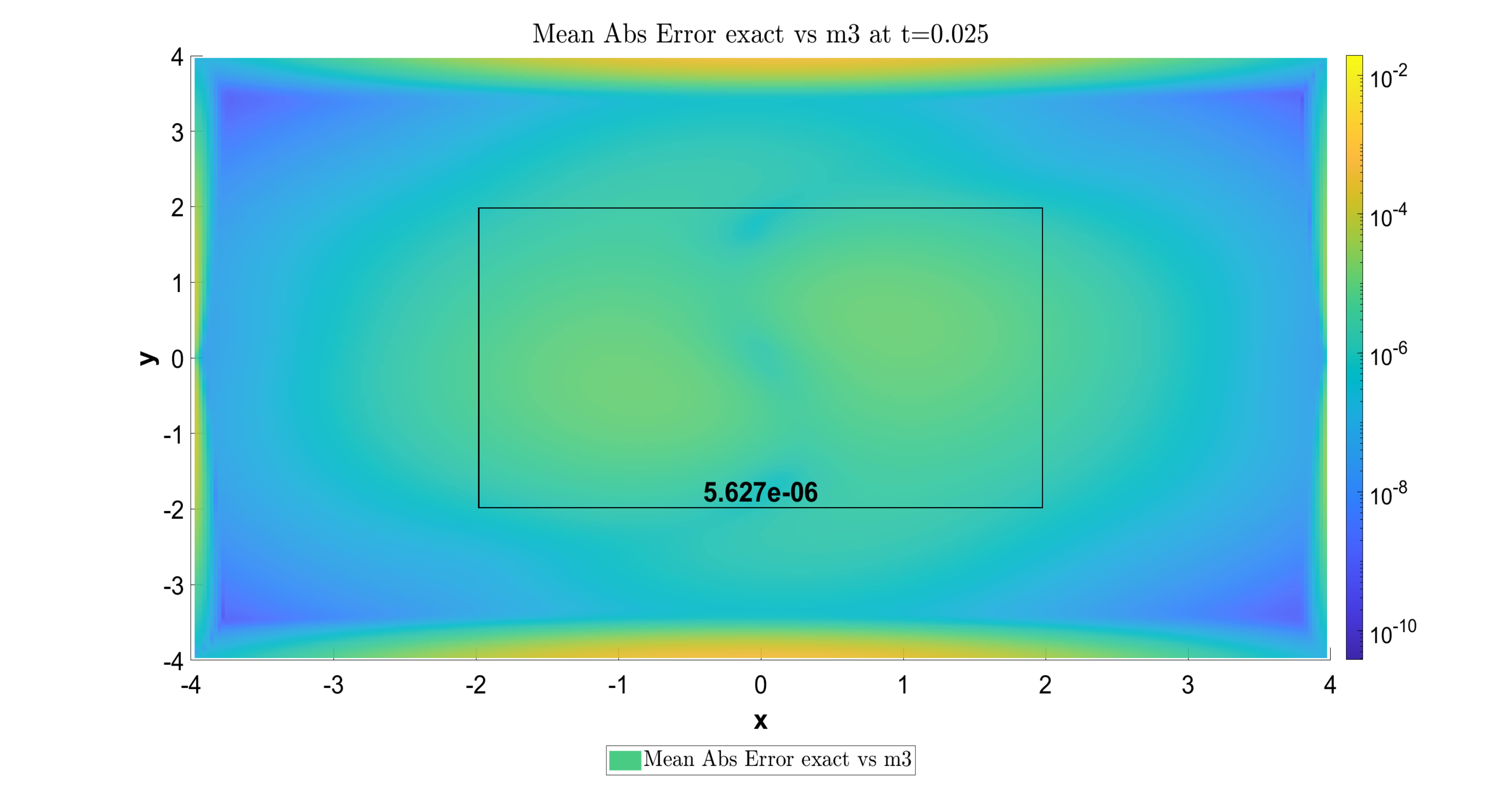}
\includegraphics[width=.49\columnwidth,page=40]{Figures/ExactMagnus3.pdf}%
\caption{Constant coefficients as in \eqref{eq:coeffsLangevinConst}: Absolute Errors of \mThree compared to \exact using $d=300$ grid points at $t=0.25$ (left) and $t=1$ (right).}%
\label{fig:errorLangevinConst}%
\end{figure}
We can see that the errors in the upper right and lower left corners are significantly increasing over time. To explain this, one should note that the Langevin equation with this specific initial datum looks like a two-dimensional Gaussian at first and its shape changes on the diagonal from the lower left corner to the upper right corner over time more than on the other diagonal. Therefore, the cut-off region is getting too small for larger times leading to boundary effects in the error plots. This also explains why the upper left and lower right corner remain a stable small error. In the center of the error plots we can see an increasing error over time. If this is due to the boundary effects, error propagation due to the iterated scheme, the error due to the order 3 truncation or the algorithm used for the matrix-vector exponential is not apparent in this illustration, we suspect a mixture of all of them.

\paragraph*{Comparison to the Euler-Maruyama scheme}
For this experiment, we will compare different choices of parameters for both the Magnus scheme and Euler-Maruyama scheme. There are essentially two major parameters contributing to the possible accuracy. One is the time step-size of the individual schemes and the other one the space discretization. Hence, we compare Euler and Magnus methods with different time step-sizes for different space discretizations $d=100,200,300$ to increase the level of accuracy.
In the Figures \ref{fig:dim100}, \ref{fig:dim200} and \ref{fig:dim300} the left y-axis shows the average computational times in a log scale and the right y-axis the mean relative errors $\Err_T^{d,4}$ also in a linear scale. The computational times (in seconds) are depicted in the left blue columns and the mean relative errors in the red right columns for each method.

As mentioned in Table \ref{tab:notations}, ``E, x'' denotes Euler with step-size
$\stepSize=x$ and ``M2, x'', ``M3, x'' denotes Magnus with step-size $\stepSize=x$ for order 2 and 3, respectively. In Figure \ref{fig:dim100} we compare the errors and computational times of the methods with spatial dimension $d=100$, in Figure \ref{fig:dim200} with $d=200$ and in Figure \ref{fig:dim300} with $d=300$.

\begin{figure}%
\centering
\includegraphics[width=\columnwidth]{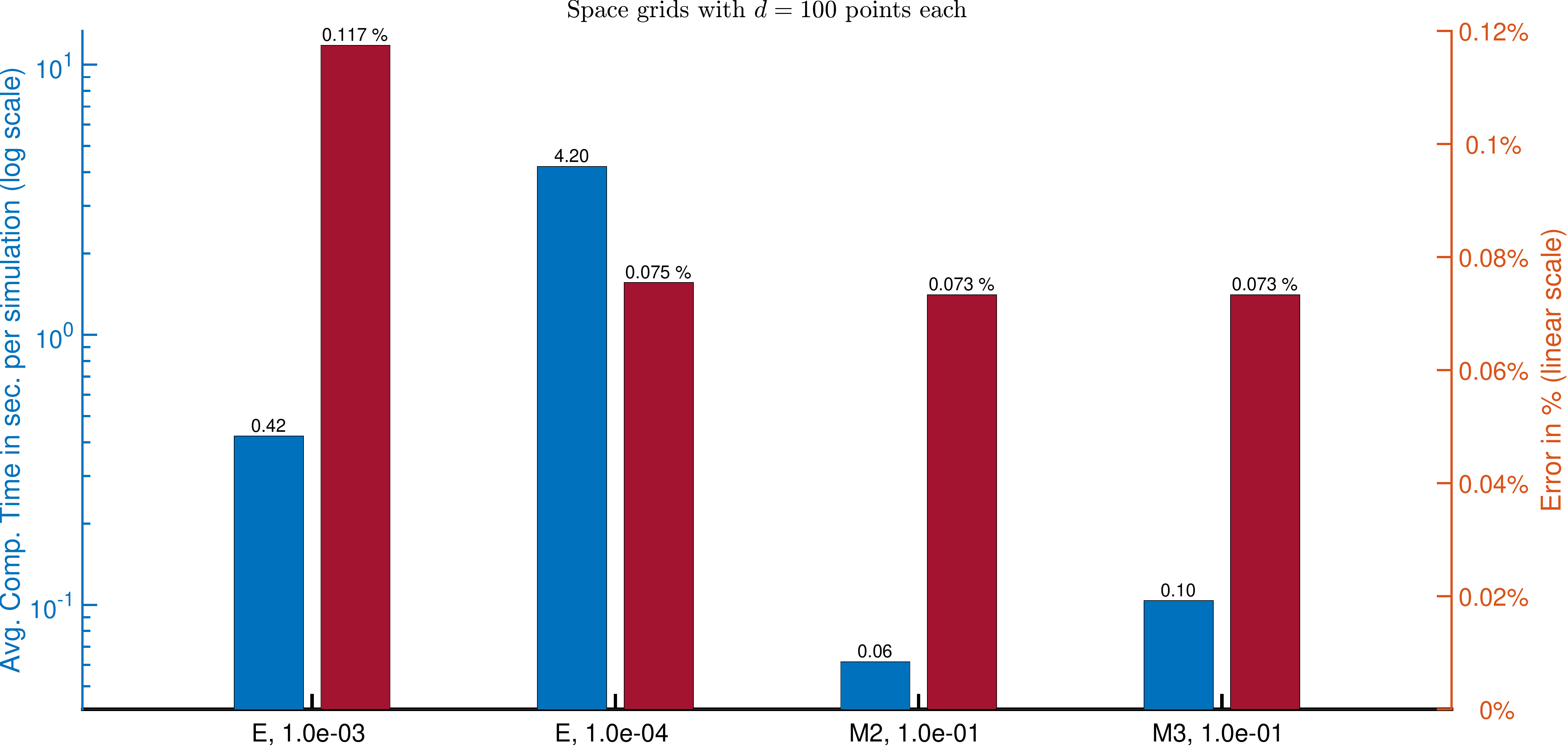}%
\caption{Constant coefficients as in \eqref{eq:coeffsLangevinConst}: Computational times and errors of the Magnus expansion and Euler scheme for $d=100$.}%
\label{fig:dim100}%
\end{figure}
Let us focus on Figure \ref{fig:dim100} with $d=100$. We can see that four different methods are compared: the Euler method with step-size $\stepSize=10^{-3}$ and $\stepSize=10^{-4}$, as well as the Magnus method with step-size $\stepSize=0.1$ of order 2 and order 3. It is notable that the Euler method with step-size $\stepSize=10^{-4}$ and the Magnus methods perform almost the same with respect to the error. The Euler method with step-size $\stepSize=10^{-3}$ has roughly double the error of the method with step-size $\stepSize=10^{-4}$ but is ten times faster. Overall, the Magnus methods were the fastest methods. The Magnus method of order two, three is $70$, $42$ times, respectively, faster than Euler method with step-size $\stepSize=10^{-4}$ and has a slightly better accuracy.

\begin{figure}%
\centering
\includegraphics[width=\columnwidth]{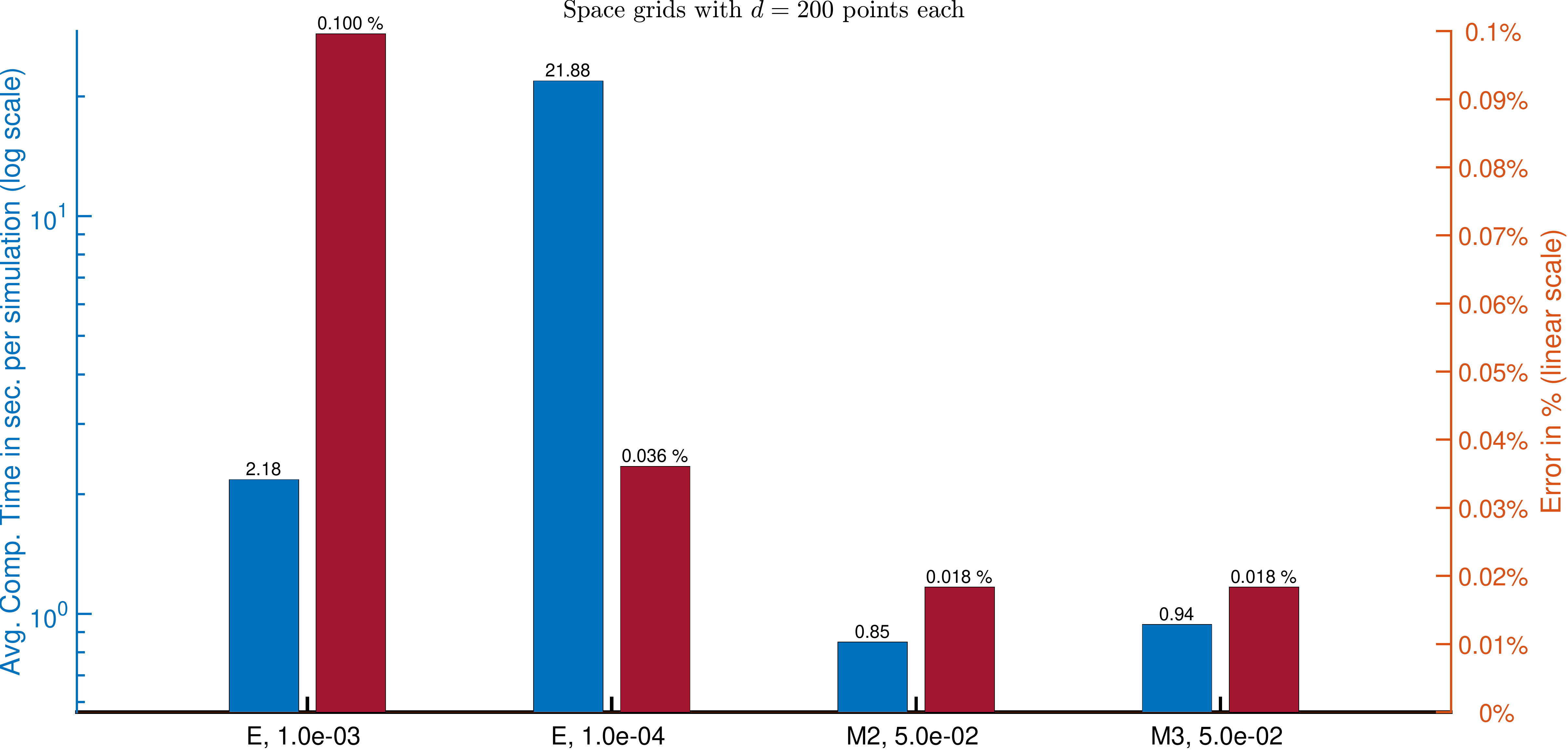}%
\caption{Constant coefficients as in \eqref{eq:coeffsLangevinConst}: Computational times and errors of the Magnus expansion and Euler scheme for $d=200$.}%
\label{fig:dim200}%
\end{figure}
Now, let us consider Figure \ref{fig:dim200} with $d=200$. Again, we can see that two Euler methods and two Magnus methods are compared but this time we have a step-size $\stepSize=0.05$ for the Magnus methods. Similarly, to Figure \ref{fig:dim100}, we can see that the Euler method with step-size $\stepSize=10^{-3}$ performed worst and the Magnus methods best in terms of accuracy. However, this time the Euler method with step-size $\stepSize=10^{-4}$ has twice the error compared to the Magnus methods and is
$25$, $23$ times slower than the Magnus method with order 2, 3, respectively.

%

\begin{figure}%
\centering
\includegraphics[width=\columnwidth]{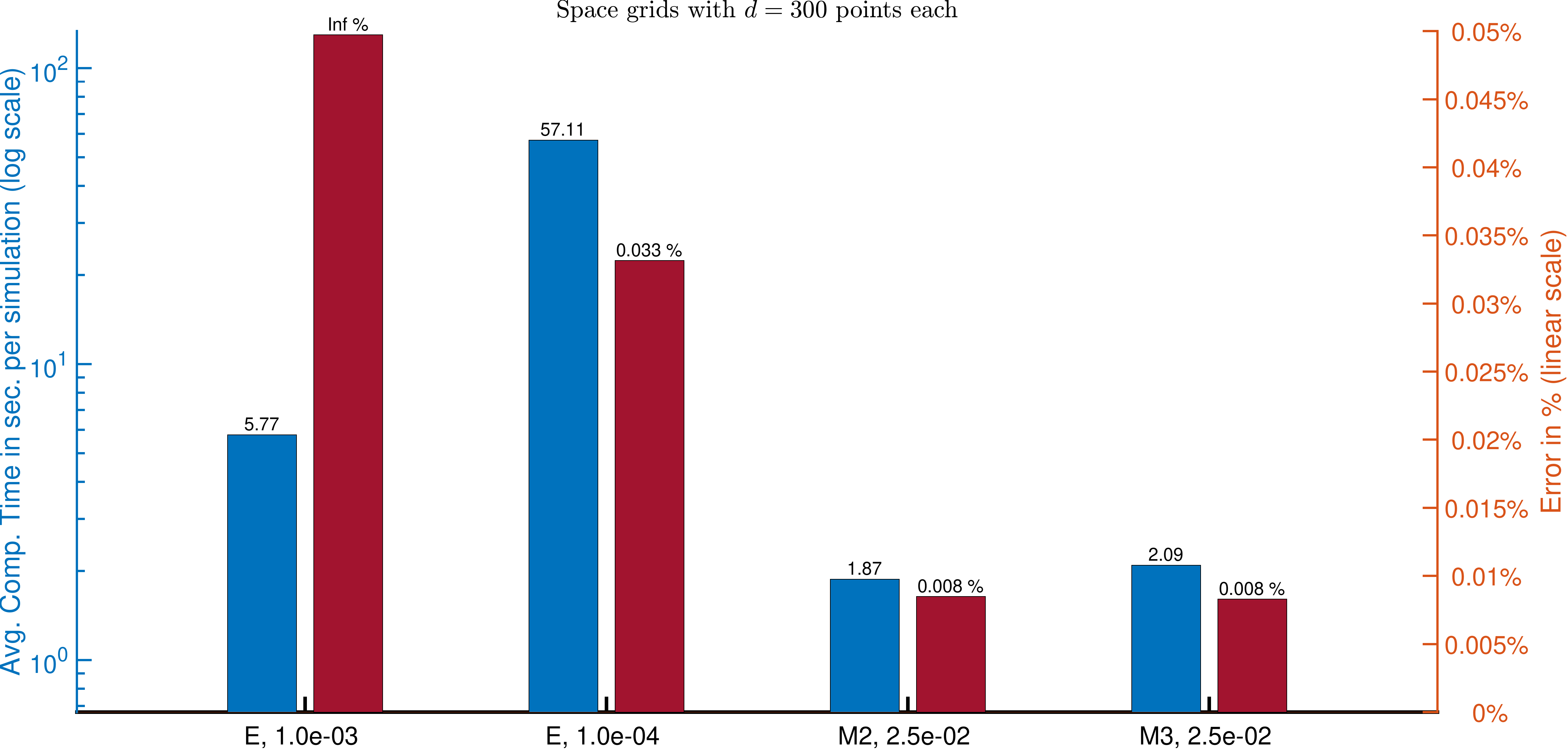}%
\caption{Constant coefficients as in \eqref{eq:coeffsLangevinConst}: Computational times and errors of the Magnus expansion and Euler scheme for $d=300$.}%
\label{fig:dim300}%
\end{figure}
In Figure \ref{fig:dim300} with $d=300$ the Euler method with step-size $\stepSize=10^{-3}$ is exploding, since its stability criterion is violated and its errors is $\infty$.
Therefore, we compare the Euler method with step-size $\stepSize=10^{-4}$ to the Magnus method with with step-size $\stepSize=0.025$ with order two and three. This time the Euler method is four times worse in terms of accuracy and $30$, $27$ times slower than Magnus with order two, three, respectively. We also performed tests with an Euler method using step-size $\stepSize=10^{-5}$. Its accuracy was still slightly worse compared to the Magnus methods and its computational time ten times slower than the Euler scheme in the figure. This results in a speed-up of order 250 of the Magnus method compared to an Euler scheme with similar accuracy.

Overall,
from these observation it is clear that an Euler scheme with a fine time-discretization is essential to make it comparable to the iterated Magnus scheme in terms of accuracy.
Moreover, increasing the number of grid points is leading to less accurate
errors using the Euler method with step-size $\stepSize=10^{-4}$ compared to the Magnus schemes with corresponding step-sizes, while the Magnus methods remain roughly $30$ times faster in all tests.

\begin{remark}\label{rem:stepSizeControl}%
    If we have a close look at all tests from above we can see that all of them share a common feature, namely for reasonable parameters \mTwo and \mThree were always close.
    Therefore, this leads to a natural step-size control in time by comparing the results of Magnus order 2 to order 3.
    If they are closer than a given tolerance then the step-size is small enough, otherwise make it smaller by a given factor.

    With this method, the computation of the Magnus logarithms up to order 3 can be reused for Magnus order 2. However, two matrix-vector exponentials for each trajectory are necessary to determine if the time-step is rejected. For implementations with a lot of trajectories, one can think about using less randomly chosen trajectories to determine the correct step-size to increase the overall performance.
\end{remark}

\begin{remark}\label{rem:determLangevin}%
    The Magnus expansion holds an advantage over all other finite-difference method in the deterministic case. Inspecting the approximation formulas in the case
    $A\equiv 0$ reveals immediately that the Magnus expansion is exact, at order $1$, up to the initial space discretization for $x$ and $v$, meaning that its accuracy is far more superior than e.g. explicit and implicit Euler-schemes.
\end{remark}

\begin{remark}\label{rem:piecewise}%
   We would like to point out, that it is straightforward to use the iterated Magnus scheme in the case of deterministic piecewise constant coefficients in time by partitioning the time interval according to the piecewise definition. Then on each of these intervals the Magnus expansion formulas for constant coefficients hold.
\end{remark}
\subsection{The Magnus expansion for the stochastic Langevin equation with variable coefficients}\label{sec:stochasticLangevinVar}
In this brief subsection, we will perform some tests in the case of variable coefficients. In particular, we choose bounded, smooth coefficients of the form
\begin{align}
    &h\equiv f^v\equiv g^{xx}\equiv g^{xv}\equiv \sigma \equiv \sigma^{x}\equiv 0,\\&
    f_x(x,v) \coloneqq -v, \quad g^{vv}(x,v) =  a \left(1+\frac{1}{x^2+1}\right), \quad \sigma^v(x,v)\equiv \sigma \sqrt{1+\frac{1}{x^2+1}},
    \label{eq:coeffsLangevinVar}
\end{align}
with $a,\sigma\in\R_{>0}$, as above, satisfying $g^{vv}(x,v) - \left(\sigma^v(x,v)\right)^2 \equiv 1 >0$, as in the constant coefficient case. We will also use the same initial condition as in \eqref{eq:initialdatum}. 


Analog to Figure \ref{fig:StepTest}, we show in Figure \ref{fig:StepTestVar} the case of varying step-sizes for the Magnus method with fixed spatial discretization $d=200$.
The average computational times of \mTwo and \mThree in seconds, per simulation, are again depicted in light blue and dark blue, respectively. The errors are this time with respect to the Euler method with $\stepSize=10^{-4}$, since an exact solution is not available, and again illustrated as orange for \mTwo and red for \mThree.

\begin{figure}[H]%
\centering
\includegraphics[width=\columnwidth]{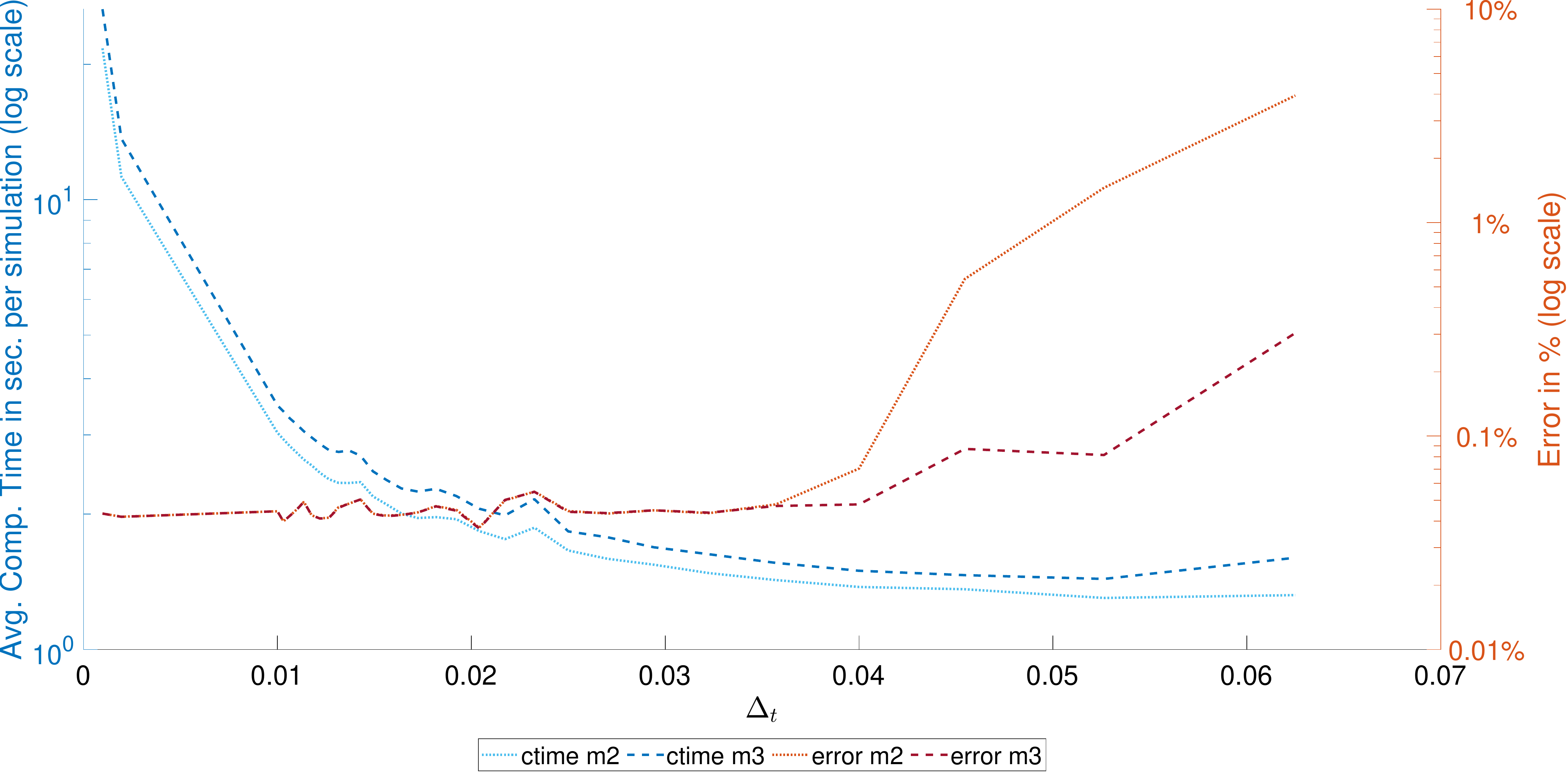}%
\caption{Variable coefficients as in \eqref{eq:coeffsLangevinVar}: Computational times and errors compared to Euler with $\stepSize=10^{-4}$ of the Magnus expansion for varying number step-size $\stepSize$ with fixed spatial dimension $d=200$.}%
\label{fig:StepTestVar}%
\end{figure}
The computational times look similar to the constant coefficient case and we can notice a slight increase of the computational times for \mThree at around $0.055$. This behavior is the same in the constant coefficient case, but can only be seen at step-sizes larger than $\stepSize=0.1$. Therefore it is not part of the previous figure. This can be explained by the necessity to use more Taylor terms in the matrix-vector exponential to keep the errors small.

Moreover, we can see that the mean relative errors start to increase for a step-size larger than 0.035 for \mTwo and 0.045 for \mThree. This happens earlier than in the constant coefficient case and is the expected behavior.

In our next experiment, similarly to Figure \ref{fig:dim100}--\ref{fig:dim300}, we show in Table \ref{tab:errorsVar} the mean relative errors and computational times for $d=100,200,300$. This time we use an Euler method with $\stepSize=10^{-5}$ as our reference solution and compare an Euler method
with $\stepSize=10^{-4}$, as well as \mTwo and \mThree to it.
The results are given in Table \ref{tab:errorsVar}.
\begin{table}%
\caption{Variable coefficients as in \eqref{eq:coeffsLangevinVar}: Computational times and errors compared to Euler with $\stepSize=10^{-5}$ of the Magnus expansion for different spatial dimension $d=100,200,300$.}
\centering
\begin{tabular}{lcc}
    Method & Mean Rel. Error (in \%) & Comp. Time (in sec./simulation)\\
    \toprule
    \multicolumn{3}{c}{$d=100$}\\
    \midrule
    \euler, $\stepSize=10^{-3}$ & 0.147\,\% & 0.43\\
    \euler, $\stepSize=10^{-4}$ & 0.047\,\% & 4.24\\
    \euler, $\stepSize=10^{-5}$ & -- & 42.24 \\
    \mTwo, $\stepSize=0.05$ & 0.015\,\% & 0.13 \\
    \mThree, $\stepSize=0.05$ & 0.014\,\% & 0.24 \\
    \midrule
    \multicolumn{3}{c}{$d=200$}\\
    \midrule
    \euler, $\stepSize=10^{-3}$ & $\infty$\,\% & 2.03\\
    \euler, $\stepSize=10^{-4}$ & 0.045\,\% & 21.02\\
    \euler, $\stepSize=10^{-5}$ & -- & 209.02 \\
    \mTwo, $\stepSize=0.025$ & 0.013\,\% & 1.66 \\
    \mThree, $\stepSize=0.025$ & 0.012\,\% & 1.83 \\
    \midrule
    \multicolumn{3}{c}{$d=300$}\\
    \midrule
    \euler, $\stepSize=10^{-3}$ & $\infty$\,\% & 5.79\\
    \euler, $\stepSize=10^{-4}$ & 0.041\,\% & 58.41\\
    \euler, $\stepSize=10^{-5}$ & -- & 582.88\\
    \mTwo, $\stepSize=0.01$ & 0.014\,\% & 4.28\\
    \mThree, $\stepSize=0.01$ & 0.014\,\% & 5.0\\
\end{tabular}
\label{tab:errorsVar}
\end{table}

We can see that computational times are roughly twice higher for the Magnus scheme than in the constant coefficient case. The Euler scheme did not suffer from a performance decrease. As aforementioned this increase of computational effort can be attributed to the decreased sparsity. Therefore, the Magnus scheme is only 12 times faster than the Euler scheme.
Regarding the accuracy, we can see in all cases that the Magnus expansion is more accurate than \euler with $\stepSize=10^{-4}$.

All in all we come to the same conclusion as in the constant coefficient case.
\section{Conclusions}\label{sec:conclusion}
We have seen how to derive the Itô-stochastic Magnus expansion for SDEs with constant
matrices and used it to solve two-dimensional SPDEs with a given initial datum numerically. We derived an approximation scheme for a generic parabolic SPDE with two space variables, then tested it on a special class of Langevin-type SPDEs. 
The scheme has an excellent accuracy and its advantage in terms of computational effort
excels for higher spatial resolution. For instance, in the case of constant coefficients to have roughly the same accuracy for $d = 400$ we need an
Euler scheme with $\stepSize = 10^{-5}$ taking approximately $21$ minutes, in average per trajectory, while Magnus order 3 takes only $4.62$
seconds using $\stepSize = 0.01$, $\LebesgueDelta = 10^{-4}$. 
This is a speed-up by a factor 280 just using one GPU while sparsity ensures an almost equal memory demand. Adding multiple GPUs will lead to further improvements with respect to computational times.
\appendix
\section{Magnus expansion formulas for constant coefficients}\label{sec:MagnusFormulas}
In order to make the paper self-contained, in this section, we demonstrate how to heuristically derive the Itô-stochastic Magnus expansion formula for the equation
\begin{align}
    dX_t = B X_t dt + A X_t dW_t, \quad
    X_0=I_d, \quad
    \label{eq:MagnusFormulasCorrelated}
\end{align}
where the coefficients $B,A \in \R^{d\times d}$ are constant matrices and $W$ is a standard one-dimensional Brownian motion.

We make the ansatz $X_t=\exp\left(Y_t\right)$, where
\begin{align*}
    Y_t=
    \int_{0}^{t}{\mu\left(s,Y_s\right) ds}+
    \int_{0}^{t}{\sigma\left(s,Y_s\right) dW_s}, \quad Y_0=0_{\R^{d\times d}}.
\end{align*}
By Itô's formula (cf. \cite[p.~8 Lemma 1 and p.~9 Proposition 1]{KPP2021}) we have
 we have
\begin{align*}
    dX_t &= B X_t + A X_t dW_t\\
           &= d\exp\left(Y_t\right)\\
             &=
    \begin{aligned}[t]\arraycolsep=0pt
    &\biggl(
    \mathcal{L}_{Y_t}\left(\mu\left(s,Y_s\right)\right)
    +\frac{1}{2}
    \mathcal{Q}_{Y_t}\left(\sigma\left(t,Y_t\right),\sigma\left(t,Y_t\right)\right)
    \biggr) \exp\left(Y_t\right) dt\\&+
    \mathcal{L}_{Y_t}\left(\sigma\left(t,Y_t\right)\right)\exp\left(Y_t\right) dW_t,
    \end{aligned}
\end{align*}
where the operators $\mathcal{L}$ and $\mathcal{Q}$ associated to the first and second order derivative of the exponential map are given by
\begin{align*}
   \mathcal{L}_{Y} (M) &\coloneqq
    \sum_{n=0}^{\infty} \frac{1}{(n+1)!} \ad^n_{Y}(M),\\
		\mathcal{Q}_Y(M,N) &\coloneqq
		\sum_{n=0}^{\infty}{
        \sum_{m=0}^{\infty}{
            \frac{ \ad^n_{Y}(M)}{(n+1)!} \frac{ \ad^m_{Y} (N)}{(m+1)!}
        }
    }  +
    \sum_{n=0}^{\infty}{
        \sum_{m=0}^{\infty}{
            \frac{
                \comm{\ad^n_{Y}(N)}{\ad^m_{Y}(M)}
            }{
                (n+m+2)(n+1)!m!
            }
        }
    }.
\end{align*}
A comparison of coefficients yields
\begin{align*}
    B &\overset{!}{=}
    \mathcal{L}_{Y_t}\left(\mu\left(t,Y_s\right)\right)
    +\frac{1}{2}
    \mathcal{Q}_{Y_t}\left(\sigma\left(t,Y_t\right),\sigma\left(t,Y_t\right)\right)\\
    A &\overset{!}{=}
    \mathcal{L}_{Y_t}\left(\sigma\left(t,Y_t\right)\right).
\end{align*}
Now, Baker's lemma (cf. \cite[p.~10 Lemma 2]{KPP2021}) provides us with suitable conditions
and a series representation for the inverse of the first-order derivative operator $\mathcal{L}$, i.e.
\begin{align}
    \sigma\left(t,Y_t\right)&\equiv
    \sigma\left(Y_t\right)=
    \sum_{n=0}^{\infty}{
        \frac{\beta_n}{n!} \ad^n_{Y_t}\left(A\right)
    }\label{eq:sigmaMagnus}\\
    \mu\left(t,Y_t\right)&\equiv
    \mu\left(Y_t\right)=
    \begin{aligned}[t]\arraycolsep=0pt
        \sum_{k=0}^{\infty}
            \frac{\beta_k}{k!}
            \ad^k_{Y_t}\Biggl(
                B -
                \frac{1}{2}
                    \sum_{n=0}^{\infty}
                        \sum_{m=0}^{\infty}&
                            \frac{\ad^n_{Y_t}\left(\sigma\left(Y_t\right)\right)}{(n+1)!}
                            \frac{\ad^m_{Y_t}\left(\sigma\left(Y_t\right)\right)}{(m+1)!}
                            \\&+
                            \frac{
                                \left[
                                    \ad^n_{Y_t}\left(\sigma\left(Y_t\right)\right),
                                    \ad^m_{Y_t}\left(\sigma\left(Y_t\right)\right)
                                \right]
                            }{(n+m+2)(n+1)!m!}
            \Biggr),
    \end{aligned}
    \label{eq:muMagnus}
\end{align}
The Bernoulli numbers are denoted by $\beta_k$ and we recall that $\beta_0=1$, $\beta_1=-\frac{1}{2}$, $\beta_2=\frac{1}{6}$, $\beta_3=0$
and $\beta_4=-\frac{1}{30}$.

Now, we know the coefficients of $Y_t$ and solve the Itô-SDE by means of a stochastic Picard iteration, i.e. we start at $n=0$ with $Y_t^0 = 0_{\R^{d\times d}}$
and iterate for $n\geq 1$
\begin{align}
    Y_t^n=
    \int_{0}^{t}{
        \mu\left(s,Y_s^{n-1}\right)ds
    }+
    \int_{0}^{t}{
        \sigma\left(s,Y_s^{n-1}\right) dW_s
    }.
    \label{eq:PicardIteration}
\end{align}
In order to derive the Magnus expansion formulas we will introduce some bookkeeping parameters $\epsilon,\delta >0$ and substitute $A$ by $\epsilon A$, as well as
$B$ by $\delta B$. Henceforth, we will denote the $n$-th order Picard iteration with the substitution by $Y^{n,\epsilon,\delta}_t$.
\paragraph*{Order 1.}
Let us derive the first-order Magnus expansion, meaning we are interested in all terms with the first power of $\epsilon$ and $\delta$ in the first-order Picard iteration. Thus, we insert $Y^{0,\epsilon,\delta}_t\equiv 0$ into
\eqref{eq:PicardIteration}. Notice, that the zero matrix commutes with all matrices and therefore by definition of $\ad^0_{Y}(A)=A$ we have
\begin{align*}
    \sigma\left(Y^{0,\epsilon,\delta}_t\right)=\epsilon A.
\end{align*}
Inserting this into the formula for $\mu$ yields
\begin{align*}
    \mu\left(Y^{0,\epsilon,\delta}_t\right)=
    \delta B - \frac{1}{2} \epsilon^2 A^2,
\end{align*}
because $A$ commutes with itself as well.

Since, the Itô-correction term is of order $\epsilon^2$ it will not be part of the first-order Magnus expansion and we have
\begin{align*}
    Y^{1}_t = \int_{0}^{t}{B ds} + \int_{0}^{t}{A dW_s}
                    = B t + A W_t.
\end{align*}
\paragraph*{Order 2.} To derive the second-order Magnus expansion let us first think about how many terms we need at most in the infinite sums of \eqref{eq:sigmaMagnus} and
\eqref{eq:muMagnus} to disregard the majority of the high-order terms.

Let us consider \eqref{eq:sigmaMagnus} first and let $N=2$ denote the desired order of the Magnus expansion. The operator $\ad^0$ will at least result in a first order term in $\epsilon$ or $\delta$. The commutator $\ad^1_Y\left(X\right)=\left[Y,X\right]=YX-XY$ will at least have second order terms, because $X$ and $Y$ will at least have a first-order $\epsilon$ or $\delta$ coefficient. Now, the nested commutator
$\ad^2_Y\left(X\right)=\left[Y,\left[Y,X\right]\right]$ will have at least a third-order term for the same reason.
Therefore, to compute the $N$-th order of the Magnus expansion we only need to consider
the infinite sum in \eqref{eq:sigmaMagnus} up to $N-1$.
Now, let us consider \eqref{eq:muMagnus}. We will split our consideration into two parts by linearity and focus for the moment on
\begin{align*}
    \sum_{k=0}^{\infty}{
        \ad_{Y_t}^k\left(B\right)
    }.
\end{align*}
With exactly the same arguments as for \eqref{eq:sigmaMagnus} we only need to consider
$N-1$ terms of this infinite sum for the $N$-th order Magnus expansion.

Let us turn now to the rest, the Itô-correction term, of \eqref{eq:muMagnus}, and split it again in two parts, i.e. we consider
\begin{align*}
    \sum_{k=0}^{\infty}
        \ad^k_{Y_t}
        \left(
            \sum_{n=0}^{\infty}
            \sum_{m=0}^{\infty}
            \frac{\ad_{Y_t}^n\left(\sigma\left(Y_t\right)\right)}{(n+1)!}
            \frac{\ad_{Y_t}^m\left(\sigma\left(Y_t\right)\right)}{(m+1)!}
        \right).
\end{align*}
In the case $0=k=n=m$ we will get $\sigma^2$, which has at least order 2 terms. Hence, we will only need $k=1,\dots,N-2$. This also means, that $n$ and $m$ cannot exceed $N-2$.

Increasing $k$ and keeping $n=m=0$ will also increase the order due to the nested commutators as in the case for \eqref{eq:sigmaMagnus}. Therefore, the lowest $k$ can have the most terms resulting from $n$ and $m$. Vice versa, the higher $n$ and $m$ the lower $k$ must be to have small enough orders.

Taking all these considerations into account, we can use the following formula to cover all necessary terms up until order $N$: 
\begin{align}
    \sigma_N\left(Y_t\right)&\coloneqq
    \sum_{n=0}^{N-1}{
        \frac{\beta_n}{n!} \ad^n_{Y_t}\left(A\right)
    },\label{eq:sigmaMagnusCut}\\
    \mu_N\left(Y_t\right)&\coloneqq
        \sum_{k=0}^{N-1}
            \frac{\beta_k}{k!}
            \ad^k_{Y_t}\left(
                B
            \right)\\&\quad
        \begin{aligned}[t]\arraycolsep=0pt
        -\frac{1}{2}
        \sum_{k=0}^{N-2}
        \frac{\beta_k}{k!}
            \ad^k_{Y_t}\Biggl(&
                    \sum_{n=0}^{N-2-k}
                        \sum_{m=0}^{N-2-k}
                            \frac{\ad^n_{Y_t}\left(\sigma_{N-k}\left(Y_t\right)\right)}{(n+1)!}
                            \frac{\ad^m_{Y_t}\left(\sigma_{N-k}\left(Y_t\right)\right)}{(m+1)!}
                            \\&+
                            \frac{
                                \left[
                                    \ad^n_{Y_t}\left(\sigma_{N-k}\left(Y_t\right)\right),
                                    \ad^m_{Y_t}\left(\sigma_{N-k}\left(Y_t\right)\right)
                                \right]
                            }{(n+m+2)(n+1)!m!}
            \Biggr).
    \end{aligned}
    \label{eq:muMagnusCut}
\end{align}
There will still be a lot of higher-order terms in \eqref{eq:sigmaMagnusCut} and
\eqref{eq:muMagnusCut} but it will make our derivation of the formulas easier.

Now, we compute $\sigma_2\left(Y_t^1\right)$:
\begin{align*}
    \sigma_2\left(Y_t^{1,\epsilon,\delta}\right)&=
    \frac{1}{0!} \ad^0_{Y_t^{1,\epsilon,\delta}}\left(\epsilon A\right)-
    \frac{1}{2} \ad^1_{Y_t^{1,\epsilon,\delta}}\left(\epsilon A\right)\\&=
    \epsilon A-\frac{1}{2}\left[Y_t^{1,\epsilon,\delta},\epsilon A\right]\\&=
    \epsilon A-\frac{1}{2}\left[\delta B,\epsilon A\right] t -\frac{1}{2} \left[\epsilon A,\epsilon A\right]W_t\\&=
    \epsilon A-\frac{1}{2}\left[\delta B,\epsilon A\right] t.
\end{align*}
Next, we compute $\mu_2\left(Y_t^1\right)$:
\begin{align*}
    \hspace{1em}&\hspace{-1em}
    \mu_2\left(Y_t^{1,\epsilon,\delta}\right)=
    \frac{1}{0!} \ad^0_{Y_t^{1,\epsilon,\delta}}\left(\delta B\right)-
    \frac{1}{2} \ad^1_{Y_t^{1,\epsilon,\delta}}\left(\delta B\right)\\&\quad-
    \frac{1}{2}\Biggl(
        \frac{1}{0!}
            \biggl(
                \frac{\ad^0_{Y_t^{1,\epsilon,\delta}}\left(\sigma_2\left(Y_t^{1,\epsilon,\delta}\right)\right)}{1!}
                \frac{\ad^0_{Y_t^{1,\epsilon,\delta}}\left(\sigma_2\left(Y_t^{1,\epsilon,\delta}\right)\right)}{1!}
                \\&\qquad+
                \frac{\left[\ad^0_{Y_t^{1,\epsilon,\delta}}\left(\sigma_2\left(Y_t^{1,\epsilon,\delta}\right)\right),
								\ad^0_{Y_t^{1,\epsilon,\delta}}\left(\sigma_2\left(Y_t^{1,\epsilon,\delta}\right)\right)\right]}{2}
            \biggr)
    \Biggr)\\&=
    \delta B-\frac{1}{2}\left[Y_t^{1,\epsilon,\delta},\delta B\right]\\&\quad
    -\frac{1}{2}\Biggl(
        \left(\epsilon A-\frac{1}{2}\left[\delta B,\epsilon A\right] t\right)^2
        +\frac{\left[\epsilon A-\frac{1}{2}\left[\delta B,\epsilon A\right] t,\epsilon A-\frac{1}{2}\left[\delta B,\epsilon A\right] t\right]}{2}
    \Biggr).
\end{align*}
Counting $\epsilon$ and $\delta$ we can already see that
\begin{align*}
    \left(\epsilon A-\frac{1}{2}\left[\delta B,\epsilon A\right] t\right)^2
    &\approx
    \epsilon^2 A^2,\\
    \left[\epsilon A-\frac{1}{2}\left[\delta B,\epsilon A\right] t,\epsilon A-\frac{1}{2}\left[\delta B,\epsilon A\right] t\right]
    &\approx
    \left[\epsilon A,\epsilon A\right]=0.
\end{align*}
Thus,
\begin{align*}
    \mu\left(Y_t^{1,\epsilon,\delta};2\right)=
    \delta B -\frac{1}{2} \left[\epsilon A, \delta B\right] W_t- \frac{1}{2} \epsilon^2 A^2.
\end{align*}
In total, we have
\begin{align*}
    Y_t^2 =
    \int_{0}^{t}{
        B - \frac{1}{2} \left[A,B\right] W_s  - \frac{1}{2} A^2 ds
    }+
    \int_{0}^{t}{
        A - \frac{1}{2} \left[B,A\right] s dW_s
    }.
\end{align*}
In a next step, we can apply Itô's formula to replace the stochastic integral by a Lebesgue integral like follows:
\begin{lemma}\label{lem:Ito1}%
    For $p,p_1,p_2,q,q_1,q_2 \in \N_0$ we have
    \begin{align}
        &\int_{0}^{t}{s^p W_s^q dW_s}=
            \frac{1}{q+1}\left(
                t^p W_t^{q+1}-
                \int_{0}^{t}{
                    \left[
                        \frac{q(q+1)}{2}s^p W_s^{q-1}+pW_s^{q+1}s^{p-1}
                    \right]
                    ds
                }
            \right)\label{eq:Ito1a}\\
        &\int_{0}^{t}{s^{p_1}\int_{0}^{s}{r^{p_2} W_r^q dr}ds}=
            \frac{1}{1+p_1}\left(
                t^{1+p_1}\int_{0}^{t}{s^{p_2} W_s^q ds}
                - \int_{0}^{t}{s^{1+p_1+p_2}W_s^q ds}
            \right)\label{eq:Ito1b}\\
        &\int_{0}^{t}{s^{p_1}W_s^{q_1}\int_{0}^{s}{r^{p_2} W_r^{q_2} dr}dW_s}=
            \begin{aligned}[t]\arraycolsep=0pt
                \frac{1}{q_1 + 1}\Biggl(&
                t^{p_1} W^{q_1+1}_t \int_{0}^{t}{s^{p_2} W_s^{q_2} ds}
                \\&-\int_{0}^{t}{s^{p_1+p_2} W_s^{q_1+q_2+1} ds}
                \\&- \frac{q_1(q_1+1)}{2}\int_{0}^{t}{s^{p_1}W_s^{q_1-1} \int_{0}^{s}{r^{p_2}W_r^{q_2}dr}ds}
                \\&- p_1 \int_{0}^{t}{s^{p_1-1} W^{q_1+1}_s \int_{0}^{s}{r^{p_2} W_r^{q_2} dr} ds}
                \Biggr).
            \end{aligned}
            \label{eq:Ito1c}.
    \end{align}
\end{lemma}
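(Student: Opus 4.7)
The plan is to prove each of the three identities by a single application of Itô's formula (or its deterministic Leibniz-rule analogue for \eqref{eq:Ito1b}), followed by an algebraic rearrangement. No deep machinery is required; the work is purely bookkeeping of the drift corrections.

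For \eqref{eq:Ito1a}, I would apply Itô's formula to the process $f(t,W_t)$ with $f(t,w) = \tfrac{1}{q+1} t^{p} w^{q+1}$. The partial derivatives give a drift of the form $\tfrac{p}{q+1} t^{p-1} W_t^{q+1} + \tfrac{q}{2} t^{p} W_t^{q-1}$ and a diffusion coefficient $t^{p} W_t^{q}$. Integrating from $0$ to $t$, solving for the stochastic integral, and multiplying through by $q+1$ reproduces the right-hand side of \eqref{eq:Ito1a}. The boundary cases $p=0$ and $q=0$ are handled automatically because the corresponding coefficients $p$ and $\tfrac{q(q+1)}{2}$ vanish, absorbing the otherwise ill-defined factors $s^{p-1}$ and $W_s^{q-1}$.

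Identity \eqref{eq:Ito1b} is purely pathwise and deterministic: on each trajectory, set $F(s) := \int_0^s r^{p_2} W_r^{q} dr$, so that $F'(s) = s^{p_2} W_s^{q}$, and integrate $s^{p_1} F(s)$ by parts on $[0,t]$ with antiderivative $\tfrac{s^{1+p_1}}{1+p_1}$. Since $p_1 \in \N_0$ the factor $1+p_1$ is nonzero and the boundary term at $s=0$ vanishes because $F(0)=0$, yielding \eqref{eq:Ito1b} immediately.

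For \eqref{eq:Ito1c}, the key observation is that $F(t) = \int_0^t r^{p_2} W_r^{q_2} dr$ is absolutely continuous in $t$ and hence of finite variation, so its quadratic covariation with any Itô process vanishes. I would therefore apply Itô's product rule to $Z_t F(t)$ with $Z_t := t^{p_1} W_t^{q_1+1}$, giving $d(Z_t F(t)) = F(t)\, dZ_t + Z_t F'(t)\, dt$. Expanding $dZ_t$ via Itô's formula produces a drift with coefficients $p_1 t^{p_1-1} W_t^{q_1+1}$ and $\tfrac{q_1(q_1+1)}{2} t^{p_1} W_t^{q_1-1}$, together with the martingale part $(q_1+1) t^{p_1} W_t^{q_1} dW_t$. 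Integrating over $[0,t]$, isolating the stochastic integral $\int_0^t s^{p_1} W_s^{q_1} F(s)\, dW_s$, dividing by $q_1+1$, and substituting back the definition of $F$ yields \eqref{eq:Ito1c}.

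The only real obstacle is keeping track of signs and coefficients; in particular, the cases where some $p_i$ or $q_i$ equals zero require no separate treatment because the offending term carries a prefactor that is itself zero, so the formulas hold uniformly on $\N_0$.
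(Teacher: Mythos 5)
Your proposal is correct, and all three identities come out with the right coefficients; the core tool (Itô/product rule plus rearrangement) is the same as in the paper, but your bookkeeping choices differ in two places worth noting. For \eqref{eq:Ito1a} the paper does not argue directly: it observes that \eqref{eq:Ito1a} is the special case of \eqref{eq:Ito1c} obtained by taking $p_2=q_2=0$ (so the inner integral is just $s$) and shifting $p_1=p-1$, whereas you apply Itô's formula to $f(t,w)=\tfrac{1}{q+1}t^pw^{q+1}$ and get it in one line; your route is self-contained and arguably more transparent, the paper's saves a computation by reusing \eqref{eq:Ito1c}. For \eqref{eq:Ito1c} the paper applies the product rule to the pairing $\bigl(s^{p_1}W_s^{q_1}\int_0^s r^{p_2}W_r^{q_2}\,dr\bigr)\cdot W_s$, which forces it to compute the quadratic covariation $d\langle\,\cdot^{p_1}W_\cdot^{q_1}\int_0^{\cdot}r^{p_2}W_r^{q_2}\,dr,\,W_\cdot\rangle_s = s^{p_1}q_1W_s^{q_1-1}\int_0^s r^{p_2}W_r^{q_2}\,dr\,ds$ and then merge it with the drift terms; you instead pair $\bigl(t^{p_1}W_t^{q_1+1}\bigr)\cdot F(t)$ with $F$ of finite variation, so the covariation vanishes identically and the drift coefficients $p_1$ and $\tfrac{q_1(q_1+1)}{2}$ appear directly from Itô's formula applied to $t^{p_1}W_t^{q_1+1}$. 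Both decompositions yield the same identity; yours trims one step. Your treatment of \eqref{eq:Ito1b} by deterministic integration by parts is essentially identical to the paper's product-rule computation, and your remark that the degenerate cases $p_i=0$, $q_i=0$ are absorbed by vanishing prefactors matches the implicit convention used in the paper.
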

\begin{proof}
    Note that \eqref{eq:Ito1a} is a special case of \eqref{eq:Ito1c} by setting
    $p=p_1-1$, $q=q_1$ and $p_2=q_2=0$.

    Now, we show \eqref{eq:Ito1b}. With Itô's product rule we get
    \begin{align*}
        d \left(s^{p_1}\int_{0}^{s}{r^{p_2} W_r^q dr}\ s\right)&=
        s^{p_1}\int_{0}^{s}{r^{p_2} W_r^q dr} ds
            + s\ d\left(s^{p_1}\int_{0}^{s}{r^{p_2} W_r^q dr}\right) + 0
        \\&=
        s^{p_1}\int_{0}^{s}{r^{p_2} W_r^q dr} ds
            + s\ d\left(s^{p_1}s^{p_2} W_s^q ds+\int_{0}^{s}{r^{p_2} W_r^q dr} p_1 s^{p_1-1}ds\right)
        \\&=
        \left(1+p_1\right)s^{p_1}\int_{0}^{s}{r^{p_2} W_r^q dr} ds +
        s^{1+p_1+p_2} W_s^q ds.
    \end{align*}
    Rearranging the equation yields the claim.

    Next, we show \eqref{eq:Ito1c}. With Itô's product rule we get
    \begin{align*}
        d\left(
            s^{p_1}W_s^{q_1} \int_{0}^{s}{r^{p_2}W_r^{q_2} dr} W_s
        \right)&=
        s^{p_1}W_s^{q_1} \int_{0}^{s}{r^{p_2}W_r^{q_2} dr} dW_s+
         W_s d\left(s^{p_1}W_s^{q_1} \int_{0}^{s}{r^{p_2}W_r^{q_2} dr}\right)
        \\&\quad+ d\left\langle \cdot ^{p_1}W_\cdot ^{q_1} \int_{0}^{\cdot }{r^{p_2}W_r^{q_2} dr},W_\cdot \right\rangle_s.
    \end{align*}
    Now, use Itô's product rule and Itô's formula on the the following term
    \begin{align*}
        \hspace{1em}&\hspace{-1em}
        d\left(s^{p_1}W_s^{q_1} \int_{0}^{s}{r^{p_2}W_r^{q_2} dr}\right)=
        s^{p_1}W_s^{q_1} d\left(\int_{0}^{s}{r^{p_2}W_r^{q_2} dr}\right)+
        \int_{0}^{s}{r^{p_2}W_r^{q_2} dr} d\left(s^{p_1}W_s^{q_1}\right)
        +0\\&=
        s^{p_1+p_2}W_s^{q_1+q_2} ds+
        \int_{0}^{s}{r^{p_2}W_r^{q_2} dr}
        \Biggl(
            s^{p_1}\biggl(
                q_1 W_s^{q_1-1} dW_s+
                \frac{q_1(q_1-1)}{2} W_s^{q_1-2} ds
            \biggr)\\&\qquad+
            W_s^{q_1}\biggl(
                p_1 s^{p_1-1} ds
            \biggr)+
            0
        \Biggr)\\&=
        \int_{0}^{s}{r^{p_2}W_r^{q_2} dr}s^{p_1}q_1 W_s^{q_1-1} dW_s+
        \Biggl[
            s^{p_1+p_2}W_s^{q_1+q_2} +
            \int_{0}^{s}{r^{p_2}W_r^{q_2} dr}s^{p_1}\frac{q_1(q_1-1)}{2} W_s^{q_1-2}
            \\&\qquad+
            \int_{0}^{s}{r^{p_2}W_r^{q_2} dr}W_s^{q_1}p_1 s^{p_1-1}
        \Biggr] ds
    \end{align*}
    For the quadratic variation from above we have
    \begin{align*}
        d\left\langle \cdot ^{p_1}W_\cdot ^{q_1} \int_{0}^{\cdot }{r^{p_2}W_r^{q_2} dr},W_\cdot \right\rangle_s&=
        \int_{0}^{s}{r^{p_2}W_r^{q_2} dr}s^{p_1}q_1 W_s^{q_1-1} ds.
    \end{align*}
    In total, we have
    \begin{align*}
        \hspace{1em}&\hspace{-1em}
        d\left(
            s^{p_1}W_s^{q_1} \int_{0}^{s}{r^{p_2}W_r^{q_2} dr} W_s
        \right)\\&=
        (1+q_1)s^{p_1}W_s^{q_1} \int_{0}^{s}{r^{p_2}W_r^{q_2} dr} dW_s+
        \Biggl[
            s^{p_1+p_2}W_s^{q_1+q_2+1} +
            \frac{q_1(q_1+1)}{2}s^{p_1} W_s^{q_1-1} \int_{0}^{s}{r^{p_2}W_r^{q_2} dr}
            \\&\qquad+
            p_1\int_{0}^{s}{r^{p_2}W_r^{q_2} dr}W_s^{q_1+1} s^{p_1-1}
        \Biggr] ds.
    \end{align*}
    Rearranging the equation yields the claim.
\end{proof}
Using Lemma \ref{lem:Ito1} \eqref{eq:Ito1a} in the case $p=1$ and $q=0$ and the skew-symmetry of the commutator we have finally
\begin{align*}
    Y_t^2 &=
    Bt - \frac{1}{2} A^2 t + \frac{1}{2} \left[B,A\right] \int_{0}^{t}{W_s ds}
    +AW_t -\frac{1}{2} \left[B,A\right] \left(
        tW_t - \int_{0}^{t}{W_s ds}
    \right)\\&=
    Y_t^1
    - \frac{1}{2} A^2 t + \left[B,A\right] \int_{0}^{t}{W_s ds} -\frac{1}{2} \left[B,A\right] tW_t.
\end{align*}
\paragraph*{Order 3.} From now on we will always repeat the steps seen from the derivation of order 2. First, identify all terms using \eqref{eq:sigmaMagnusCut} and \eqref{eq:muMagnusCut} and secondly apply Lemma \ref{lem:Ito1} to remove the iterated (stochastic) integrals by expressions with single Lebesgue integrals.

After using \eqref{eq:sigmaMagnusCut} and \eqref{eq:muMagnusCut} we get
\begin{align*}
    \sigma_3\left(Y_t^{2,\epsilon,\delta}\right)&=
    \epsilon A - \frac{1}{2}\left[\delta B,\epsilon A\right]t - \frac{1}{12} \left[\left[\delta B,\epsilon A\right],B\right] t^2
    -\frac{1}{2}\left[\left[\delta B,\epsilon A\right],\epsilon A\right] \int_{0}^{t}{W_s ds}
    \\&\quad+\frac{1}{6}\left[\left[\delta B,\epsilon A\right],\epsilon A\right] W_t
    \\
    \mu_3\left(Y_t^{2,\epsilon,\delta}\right)&=
    \delta B - \frac{1}{2}\epsilon^2 A^2
    +\frac{1}{12}\left[\left[\delta B,\epsilon A\right],\epsilon A\right]t
    -\frac{1}{2}\left[\left[\delta B,\epsilon A\right],\delta B\right]\int_{0}^{t}{W_s ds}
    +\frac{1}{2}\left[\delta B,\epsilon A\right]W_t
    \\&\quad+\frac{1}{3}\left[\left[\delta B,\epsilon A\right],\delta B\right] t W_t
    +\frac{1}{12}\left[\left[\delta B,\epsilon A\right],\epsilon A\right] W_t^2
\end{align*}
Applying Lemma \ref{lem:Ito1} and collecting all terms yields
\begin{align*}
    Y_t^3 =
    Y_t^2&+
    \left[\left[B,A\right],A\right] \left(\frac{1}{2}\int_{0}^{t}{W_s^2 ds}-\frac{1}{2}W_t \int_{0}^{t}{W_s ds}+\frac{1}{12} t W_t^2\right)
    \\&+\left[\left[B,A\right],B\right] \left(
        \int_{0}^{t}{sW_s ds}-\frac{1}{2} t \int_{0}^{t}{W_s ds}-\frac{1}{12} t^2 W_t
    \right).
\end{align*}
\begin{footnotesize}
\bibliographystyle{acm}
\bibliography{bib}
\end{footnotesize}

\end{document}